\newif\ifnips
\newcommand{\innp}[1]{\left\langle #1 \right\rangle}
\newcommand{\mA}{\mathbf{A}}
\newcommand{\vx}{\mathbf{x}}
\newcommand{\vxh}{\mathbf{\hat{x}}}
\newcommand{\vzh}{\mathbf{\hat{z}}}
\newcommand{\vy}{\mathbf{y}}
\newcommand{\vz}{\mathbf{z}}
\newcommand{\vb}{\mathbf{b}}
\newcommand{\vu}{\mathbf{u}}
\newcommand{\defeq}{\stackrel{\mathrm{\scriptscriptstyle def}}{=}}
\newcommand{\etal}{\textit{et al}.}
\newenvironment{myalign*}{\par\nobreak\ifnips\footnotesize\else\small\fi\noindent\align*}{\endalign*}
\def\mathcolor#1#{\@mathcolor{#1}}
\def\@mathcolor#1#2#3{%
  \protect\leavevmode
  \begingroup
    \color#1{#2}#3%
  \endgroup
}
\newcommand*{\vsepfbox}[1]{%
  \begingroup
    \sbox0{\fbox{#1}}%
    \setlength{\fboxrule}{0pt}%
    \mbox{\kern-\fboxsep\fbox{\unhbox0}\kern-\fboxsep}%
  \endgroup
}
\theoremstyle{plain} \numberwithin{equation}{section}
\newtheorem{theorem}{Theorem}[section]
\numberwithin{theorem}{section}
\newtheorem{lemma}[theorem]{Lemma}
\newtheorem{proposition}[theorem]{Proposition}
\newtheorem{fact}[theorem]{Fact}
\theoremstyle{definition}
\newtheorem{definition}[theorem]{Definition}
\newtheorem*{rep@theorem}{\rep@title}
\newcommand{\newreptheorem}[2]{%
\newenvironment{rep#1}[1]{%
 \def\rep@title{#2 \ref{##1}}%
 \begin{rep@theorem}}%
 {\end{rep@theorem}}}
\newcommand{\axgd}{\textsc{axgd}\xspace}
\newcommand{\agd}{\textsc{agd}\xspace}
\newcommand{\gd}{\textsc{gd}\xspace}
\newcommand{\amd}{\textsc{amd}\xspace}
\title{Accelerated Extra-Gradient Descent:\\ A Novel Accelerated First-Order Method\thanks{Part of this work was done while the authors were visiting the Simons Institute for the Theory of Computing. It was partially supported by NSF grant \#CCF-1718342 and by the DIMACS/Simons Collaboration on Bridging Continuous and Discrete Optimization through NSF grant \#CCF-1740425.}}
\author{Jelena Diakonikolas and Lorenzo Orecchia\\
Computer Science Department, 
Boston University\\
email: \{jelenad, orecchia\}@bu.edu }
\date{}
\begin{document}
 
\maketitle

\begin{abstract}
We provide a novel accelerated first-order method that achieves the asymptotically optimal convergence rate for smooth functions in the first-order oracle model. 
To this day, Nesterov's Accelerated Gradient Descent (\agd) and variations thereof were the only methods achieving acceleration in this standard blackbox model. 
In contrast, our algorithm is significantly different from \agd, as it relies on a {\it predictor-corrector approach} similar to that used by Mirror-Prox~\cite{Mirror-Prox-Nemirovski} and Extra-Gradient Descent~\cite{extragradient-descent} in the solution of convex-concave saddle point problems. For this reason, we dub our algorithm Accelerated Extra-Gradient Descent (\axgd). 

Its construction is motivated by the discretization of an accelerated continuous-time dynamics~\cite{krichene2015accelerated} using the classical method of implicit Euler discretization. Our analysis explicitly shows the effects of discretization through a conceptually novel primal-dual viewpoint. Moreover, we show that the method is quite general: it attains optimal convergence rates for other classes of objectives (e.g., those with generalized smoothness properties or that are non-smooth and Lipschitz-continuous) using the appropriate choices of step lengths. Finally, we present experiments showing that our algorithm matches the performance of Nesterov's method, while appearing more robust to noise in some cases. 
\end{abstract}

\section{Introduction}\label{sec:intro}

First-order methods for convex optimization have come to play an important role in the design of algorithms and in Theoretical Computer Science in general, with applications including numerical methods~\cite{ST04,KOSZ13}, graph algorithms~\cite{KLOS2014,Sherman2013}, submodular optimization~\cite{Ene} and complexity theory~\cite{JainJiUpadhyayWatrous2009}.

A classical setting for convex optimization is that of smooth optimization, i.e., minimizing a convex differentiable function $f$ over a convex set $X \subseteq \mathbb{R}^n$, with the smoothness assumption that the gradient of $f$ be $L$-Lipschitz continuous\footnote{Lipschitz continuity is defined w.r.t to a pair of dual norms $\|\cdot \|, \|\cdot\|_*$. At a first reading, these can be taken as  $\|\cdot\|_2.$} for some positive real $L,$ i.e.:
$$
\forall \; x,y \in X, \; \|\nabla f(x) - \nabla f(y) \|_* \leq L \cdot \|x-y\|.
$$
In this setting, it is also assumed that the algorithm can access the input function $f$ only via queries to a first-order oracle, i.e., a {\it blackbox}~that on input $x \in X$, returns the vector $\nabla f(x)$ in constant time.\footnote{In general, we may assume that the blackbox also returns the function value $f(x)$. However, for the general class of problems we consider this information is not necessary and the gradient suffices~\cite{nesterov2013introductory}. For intuition about this, see the  expression for the change in duality gap in Equation~\ref{eq:gapdiff}.}

Smooth optimization is of particular interest because it is the simplest setting in which the phenomenon of {\it acceleration} arises, i.e., the optimal algorithms in the blackbox model achieve an error that scales as $O(1/t^2),$ where $t$ is the number of queries~\cite{nesterov1983}. This should be compared to the convergence of steepest-descent methods, which attempt to locally minimize the first-order approximation to the function and only yield $O(1/t)$-convergence~\cite{ben2001lectures,nesterov2013introductory}.
Acceleration has proved an active topic of algorithmic research, both for the promise of obtaining generic speed-ups for problems having some smoothness condition and for the unintuitive nature of the fact that faster algorithms can be obtained by not moving in the direction of steepest-descent.

Recently, a number of papers have helped demystify the concept behind accelerated algorithms by providing interpretations based on continuous dynamics and their discretization~\cite{krichene2015accelerated,wibisono2016variational,su2014differential}, geometric ideas~\cite{Bubeck2015}, and on trading off the performances of two slower first-order methods~\cite{AO-survey-nesterov}.
Despite these efforts,  to this day, Nesterov's Accelerated Gradient Descent (\agd) methods remain the only paradigm~\cite{nesterov1983, Nesterov2004} through which to obtain  accelerated algorithms in the blackbox model and in related settings, where all existing accelerated algorithms are variations of Nesterov's general method~\cite{tseng2008}.

\paragraph*{Our Main Contributions} We present a novel accelerated first-order method that achieves the optimal convergence rate for smooth functions and is significantly different from Nesterov's method, as it relies on a predictor-corrector approach, similar to that of Mirror-Prox~\cite{Mirror-Prox-Nemirovski} and Extra-Gradient Descent~\cite{extragradient-descent}. For this reason, we name our method {\it Accelerated Extra-Gradient Descent}~(\axgd). 
Our derivation of the {\axgd} algorithm is based on the discretization of a recently proposed continuous-time accelerated algorithm~\cite{krichene2015accelerated, wibisono2016variational}. The continuous-time view is particularly helpful in clarifying the relation between \agd, \axgd, and Mirror-Prox. Following~\cite{krichene2015accelerated}, given a gradient field $\nabla f$ and a prox function $\psi$, it is 
possible to define two continuous-time evolutions: the mirror-descent dynamics and the accelerated-mirror-descent dynamics (see Section~\ref{sec:continuous}). With this setup, Nesterov's \agd can be seen as a variant of the classical forward-Euler discretization applied to the accelerated-mirror-descent dynamics. In contrast, Mirror-Prox and extra-gradient methods arise from an approximate backward-Euler discretization~\cite{hairer1993solving} on the mirror-descent dynamics. Finally, our algorithm \axgd is the result of an approximate backward-Euler discretization of the accelerated mirror-descent dynamics.

Another conceptual contribution of our paper is the application of a primal-dual viewpoint on the convergence of first-order methods, both in continuous and discrete time. At every time instant $t$, our algorithm explicitly maintains a current primal solution $\vx^{(t)}$ and a current dual solution $\vz^{(t)}$, the latter in the form of a convex combination of gradients of the convex objective, i.e., a lower-bounding hyperplane.  
This primal-dual pair of solutions yields, for every $t,$  
both an upper bound $U_t$ and a lower bound $L_t$ on the optimum: $U_t \geq f(\vx^*) \geq L_t.$ In all cases, we obtain convergence bounds by explicitly quantifying the rate at which the duality gap $G_t = U_t - L_t$ goes to zero. 
We believe that this primal-dual viewpoint makes the analysis and design of first-order methods easier to carry out. We provide its application to proving other classical results in first-order methods, including Mirror Descent, Mirror-Prox, and Frank-Wolfe algorithms in the upcoming manuscript~\cite{thegaptechnique}.

\paragraph*{Other Technical Contributions} 
In Section~\ref{sec:holder}, we provide a unified convergence proof for standard smooth functions (as defined above) and for functions with H\"older-continuous gradients, a more general notion of smoothness~\cite{nemirovskii1983problem}.
While this paper focuses on the standard smooth setup, the same techniques easily yield results matching those of \agd~methods for the strongly-convex-and-smooth case. Indeed, it is possible to prove that our method is universal, in the sense of Nesterov~\cite{nesterov2015universal}, meaning that it can be composed with a line-search algorithm to yield near-optimal algorithms even when the smoothness parameters of the functions are unknown.
We illustrate this phenomenon by showing that (\axgd) also achieves the optimal rate for the optimization of Lipschitz-continuous convex functions, a non-smooth problem.

Finally, we present a suite of experiments comparing \agd,~\axgd, and standard gradient methods, showing that the performance of \axgd closely matches that of \agd methods. We also explore the empirical performance of \axgd in the practically and theoretically relevant case in which the queried gradients are corrupted by noise. We show that \axgd exhibits better stability properties than \agd in some cases, leading to a number of interesting theoretical questions on the convergence of \axgd.

\subsection{Related Work}

In his seminal work \cite{nesterov1983,Nesterov2004}, Nesterov gave a method for the minimization of convex functions that are smooth with respect to the Euclidean norm, where the function is accessed through a first-order oracle. Nesterov's method converges quadratically faster than gradient descent, at a rate of $O(\frac{1}{t^2})$, which has been shown to be asymptotically optimal~\cite{Nesterov2004} for smooth functions in this standard blackbox model~\cite{nesterov2013introductory}.
More recently, Nesterov generalized this method to allow non-Euclidean norms in the definition of smoothness~\cite{Nesterov2005}. We refer to this generalization of Nesterov's method and to instantiations thereof as \agd methods. 
Accelerated gradient methods have been widely extended and modified for different settings, including composite optimization~\cite{Nesterov2013,Lan2011},  cubic regularization~\cite{nesterov2008cubic}, and universal methods~\cite{nesterov2015universal}. They have also found a number of fundamental applications in many algorithmic areas, including machine learning (see \cite{Bube2014}) and discrete optimization~\cite{LRS2013}. 

An important application of \agd~ methods concerns the solution of various convex-concave saddle point problems. While these are examples of non-smooth problems, for which the optimal rate is known to be $\Omega(\frac{1}{\sqrt{k}})$~\cite{nemirovskii1983problem}, Nesterov showed that the saddle-point structure can be exploited by smoothing the original problem and applying \agd~methods on the resulting smooth function~\cite{Nesterov2005}. This approach~\cite{Nesterov2005, Nesterov2005excessive} yields an $O(\frac{1}{k})$-convergence for convex-concave saddle point problems with smooth gradients. Surprisingly, at around the same time, Nemirovski~\cite{Mirror-Prox-Nemirovski} gave a very different algorithm, known as Mirror-Prox, which  achieves the same complexity for the saddle point problem. Mirror-Prox does not rely on the algorithm or analysis underlying \agd, but is based instead on the idea of an {\it extra-gradient} step, i.e., a correction step that is performed at every iteration to speed up convergence. Mirror-Prox can be viewed as an approximate solution to the 
implicit Euler discretization of the standard mirror descent dynamics of Nemirovski and Yudin~\cite{nemirovskii1983problem}. In this fashion, our \axgd~algorithm resembles Mirror-Prox as it also makes use of an approximate implicit Euler step to discretize a \emph{different}, accelerated dynamic.

A number of interpretations have been proposed to explain the phenomenon of acceleration in first-order methods. Tseng gives a formal framework that unifies all the different instantiations of \agd~methods~\cite{tseng2008}. More recently, Allen-Zhu and Orecchia~\cite{AO-survey-nesterov} cast \agd~methods as the result of coupling mirror descent and gradient descent steps. Bubeck~\etal~give an elegant geometric interpretation of the Euclidean instantiation of Nesterov's method~\cite{Bubeck2015}. At the same time, Su~\etal~\cite{su2014differential}, Krichene~\etal~\cite{krichene2015accelerated}, and Wibisono~\etal~\cite{wibisono2016variational} have provided characterizations of accelerated methods as discretizations of certain families of ODEs related to the gradient flow of the objective $f.$ Our algorithm is strongly influenced by these works: in particular, the starting point for the derivation of \axgd~is the continuous-time accelerated-mirror-descent (\amd) dynamics~\cite{krichene2015accelerated}.

\subsection{Preliminaries}\label{sec:prelims}
We focus on continuous and differentiable functions defined on a closed convex set $X \subseteq \mathbb{R}^n$. We assume that there is an arbitrary (but fixed) norm $\|\cdot\|$ associated with the space, and all the statements about function properties are stated with respect to that norm. We also define the dual norm $\|\cdot\|_*$ in the standard way: $\|\vz\|_* = \sup\{{\innp{\vz, \vx}}: \|\vx\|= 1\}$. The following definitions will be useful in our analysis, and thus we state them here for completeness.  

\begin{definition}\label{def:convexity}
A function $f:X\rightarrow \mathbb{R}$ is convex on $X$, if for all $\vx, \vxh \in X$: $f(\vxh) \geq f(\vx) + \innp{\nabla f(\vx), \vxh - \vx}$.
\end{definition}

\begin{definition}\label{def:smoothness}
A function $f:X\rightarrow \mathbb{R}$ is smooth on $X$ with smoothness parameter $L$ and with respect to a norm $\|\cdot\|$, if for all $\vx, \vxh \in X$: $f(\vxh) \leq f(\vx) + \innp{\nabla f(\vx), \vxh - \vx} + \frac{L}{2}\|\vxh - \vx\|^2$.
\end{definition}

Definition \ref{def:smoothness} can equivalently be stated as: $\|\nabla f(\vx) - \nabla f(\vxh)\|_*\leq L\|\vx - \vxh\|$. 

\begin{definition}\label{def:strong-convexity}
A function $f:X\rightarrow \mathbb{R}$ is strongly convex on $X$ with strong convexity parameter $\sigma$ and with respect to a norm $\|\cdot\|$, if for all $\vx, \vxh \in X$: $f(\vxh) \geq f(\vx) + \innp{\nabla f(\vx), \vxh - \vx} + \frac{\sigma}{2}\|\vxh - \vx\|^2$.
\end{definition}

\begin{definition}\label{def:bregman-divergence}(Bregman Divergence)
$D_{\psi}(\vx, \vxh) \defeq \psi(\vx) - \psi(\vxh)-\innp{\nabla \psi(\vxh), \vx - \vxh}$. 
\end{definition}

\begin{definition}\label{def:cxv-conj}(Convex Conjugate) Function $\psi^*$ is the convex conjugate of $\psi: X \rightarrow \mathbb{R}$, if $\psi^* (\vz) = \max_{\vx \in X}\{\innp{\vz, \vx} - \psi(\vx)\}$, $\forall \vz \in \mathbb{R}$. 
\end{definition}

In the rest of the paper, we will assume that $\psi(\vx)$ is continuously differentiable, so that Fenchel-Moreau Theorem implies that $\psi^{**} = \psi$.\footnote{Note that Fenchel-Moreau Theorem requires $\psi$ to only be lower-semicontinuous for $\psi^{**} = \psi$ to hold, which is a weaker property than continuity or continuous differentiability.} 
We are interested in minimizing a convex function $f$ over $X \subseteq \mathbb{R}^n$. We let $\vx^* = \arg\min_{\vx\in X} f(\vx)$.

We will refer to any step that decreases the value of $f$ as a gradient descent step. In the special case of a smooth function $f$ the gradient descent step from a point $\vx \in X$ will be given as $\mathrm{Grad}(\vx) = \arg\min_{\vxh \in X} \{f(\vx) + \innp{\nabla f(\vx), \vxh - \vx} + \frac{L}{2}\|\vxh-\vx\|^2\}$.

We will assume that there is a strongly-convex differentiable function $\psi:X\rightarrow \mathbb{R}$ such that $\max_{\vx \in X} \{\innp{\vz, \vx} - \psi (\vx)\}$ is easily solvable, possibly in a closed form. Notice that this problem defines the convex conjugate of $\psi(\cdot)$, i.e., $\psi^* (\vz) = \max_{\vx \in X} \{\innp{\vz, \vx} - \psi (\vx)\}$. 
The following standard fact will be extremely useful in carrying out the analysis of the algorithms in this paper. 
\begin{fact}\label{fact:danskin}
Let $\psi: X \to \mathbb{R}$ be a differentiable strongly-convex function. Then:
$$
\nabla \psi^*(\vz) = \arg\max_{\vx \in X} \left\{ \innp{\vz, \vx} - \psi(\vx)\right\}.
$$ 
\end{fact}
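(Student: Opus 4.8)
The plan is to identify the maximizer on the right-hand side as the \emph{unique} subgradient of $\psi^*$ at $\vz$; uniqueness of the subgradient then forces $\psi^*$ to be differentiable at $\vz$, with $\nabla\psi^*(\vz)$ equal to that maximizer. Write $\vx_{\vz}\defeq\arg\max_{\vx\in X}\{\innp{\vz,\vx}-\psi(\vx)\}$. Since $\psi$ is strongly convex, $\vx\mapsto\innp{\vz,\vx}-\psi(\vx)$ is strongly concave, hence bounded above on the closed convex set $X$ with a maximum that is attained and unique --- the quadratic lower bound on $\psi$ coming from strong convexity dominates the linear term $\innp{\vz,\vx}$, so the relevant superlevel sets are compact. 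Thus $\vx_{\vz}$ is well defined, $\psi^*(\vz)=\innp{\vz,\vx_{\vz}}-\psi(\vx_{\vz})$, and the same argument shows $\psi^*$ is finite on all of $\mathbb{R}^n$.

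First I would check that $\vx_{\vz}\in\partial\psi^*(\vz)$: for any $\vz'$, plugging the feasible point $\vx_{\vz}$ into the definition of $\psi^*(\vz')$ yields
\[
\psi^*(\vz')\;\ge\;\innp{\vz',\vx_{\vz}}-\psi(\vx_{\vz})\;=\;\psi^*(\vz)+\innp{\vz'-\vz,\vx_{\vz}}.
\]
Also $\psi^*$ is convex, as a supremum of affine functions of $\vz$. Since a finite convex function whose subdifferential at a point is a singleton is differentiable there (with gradient equal to that single subgradient), it remains only to show $\partial\psi^*(\vz)$ contains no vector other than $\vx_{\vz}$.

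For that I would use the conjugate subgradient relation together with the paper's standing assumption $\psi^{**}=\psi$: for a closed proper convex $\psi$ one has $\vg\in\partial\psi^*(\vz)$ if and only if $\psi(\vg)+\psi^*(\vz)=\innp{\vz,\vg}$, which (since $\psi(\vg)<\infty$ forces $\vg\in X$) holds exactly when $\vg$ attains the maximum defining $\psi^*(\vz)$. By uniqueness of that maximizer this gives $\vg=\vx_{\vz}$, so $\partial\psi^*(\vz)=\{\vx_{\vz}\}$ and $\nabla\psi^*(\vz)=\vx_{\vz}$, which is the claim.

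The step I expect to be the main obstacle is the last one: the inequality above only certifies $\vx_{\vz}$ as \emph{some} subgradient, and to deduce the claimed identity for $\nabla\psi^*$ one must separately establish that $\psi^*$ is differentiable at $\vz$. This is precisely where the hypotheses are used --- strong convexity of $\psi$ makes the inner maximizer, and hence (via the conjugate subgradient relation) the subgradient of $\psi^*$, unique, while the biconjugacy $\psi^{**}=\psi$, guaranteed here by the standing differentiability assumption on $\psi$, is what lets one pass from $\vg\in\partial\psi^*(\vz)$ to the statement that $\vg$ maximizes $\innp{\vz,\cdot}-\psi(\cdot)$ over $X$. An alternative, bypassing explicit subgradient bookkeeping, is to invoke Danskin's theorem for $\phi(\vz)=\max_{\vx\in X}\{\innp{\vz,\vx}-\psi(\vx)\}$: the inner function $\innp{\vz,\vx}-\psi(\vx)$ has $\vz$-gradient $\vx$ (continuous in $\vx$, constant in $\vz$), its maximizer is unique by strong convexity, and the compactness hypothesis is supplied by restricting to a superlevel set as above, so Danskin's theorem outputs $\nabla\psi^*(\vz)=\vx_{\vz}$ directly.
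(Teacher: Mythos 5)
Your proof is correct; note, however, that the paper itself offers no proof of this statement --- it is invoked as a ``standard fact'' (the usual Legendre/Danskin conjugacy relation), and the appendix only proves the Bregman-divergence propositions and Proposition~\ref{prop:arg-min}, so there is no in-paper argument to compare against. Your route --- existence and uniqueness of the maximizer $\vx_{\vz}$ from strong concavity and coercivity, the one-line verification that $\vx_{\vz}\in\partial\psi^*(\vz)$, and then uniqueness of the subgradient via the Fenchel equality $\psi(\vg)+\psi^*(\vz)=\innp{\vz,\vg}$ together with biconjugacy, concluding differentiability of $\psi^*$ from the singleton subdifferential --- is the standard complete derivation, and you correctly identify that the differentiability of $\psi^*$ is itself part of the claim rather than a given. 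Two small points worth making explicit if you write this out: the equivalence $\vg\in\partial\psi^*(\vz)\iff\psi(\vg)+\psi^*(\vz)=\innp{\vz,\vg}$ requires the extension of $\psi$ by $+\infty$ off $X$ to be closed proper convex, which holds here because $X$ is closed and $\psi$ is continuous on $X$ (this is exactly the lower-semicontinuity the paper's footnote alludes to for $\psi^{**}=\psi$); and the coercivity/compactness argument for attainment of the maximum uses that all norms on $\mathbb{R}^n$ are equivalent, since the paper's strong convexity is with respect to an arbitrary fixed norm. Your alternative closing via Danskin's theorem is also fine, though it needs the same compactness reduction you describe.
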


Additional useful properties of Bregman divergence are provided in Appendix~\ref{sec:breg-div-prop}.

\section{Accelerated Extra-Gradient Descent}\label{sec:axgd}
In this section, we describe the \axgd~method and analyze its convergence.
For comparison, steps of \agd~and \axgd~are shown next to each other in the box below. In continuous time, both algorithms follow the same  dynamics. However, due to the different discretization methods used in constructing \agd and \axgd, they follow different discrete-time updates. In particular, we show in \cite{thegaptechnique} that \agd can be interpreted as performing explicit (forward) Euler discretization plus a gradient step to correct the discretization error. In contrast, \axgd uses an approximate implementation of implicit (backward) Euler discretization to directly control the discretization error. 

\noindent
\vsepfbox{
\begin{minipage}[t]{.48\textwidth}
\vspace{2pt}
\footnotesize
\textbf{Accelerated Gradient Descent} (\agd)\\
\vspace{12pt}
\begin{equation}\label{eq:dt-acc-grad-desc-step}
\begin{gathered}
\vx^{(k+1)} = \frac{A_k}{A_{k+1}}\vxh^{(k)} + \frac{a_{k+1}}{A_{k+1}}\nabla \psi^*(\vz^{(k)}),\\
\vz^{(k+1)} = \vz^{(k)} - a_{k+1}\nabla f(\vx^{(k+1)}),\\
\vxh^{(k+1)} = \mathrm{Grad}(\vx^{(k+1)}).
\end{gathered}
\end{equation}
\end{minipage} 
\begin{minipage}[t]{.48\textwidth}
\vspace{2pt}
\footnotesize
\textbf{Accelerated Extra-Gradient Descent} (\axgd)
\begin{equation}\label{eq:dt-acc-mp-step}
\begin{gathered}
\vxh^{(k)} = \frac{A_k}{A_{k+1}}\vx^{(k)} + \frac{a_{k+1}}{A_{k+1}}\nabla \psi^*(\vz^{(k)}),\\
\vzh^{(k)} = \vz^{(k)} - a_{k+1}\nabla f(\vxh^{(k)}),\\
\vx^{(k+1)} = \frac{A_k}{A_{k+1}}\vx^{(k)} + \frac{a_{k+1}}{A_{k+1}}\nabla \psi^*(\vzh^{(k)}),\\
\vz^{(k+1)} = \vz^{(k)} - a_{k+1}\nabla f(\vx^{(k+1)}).
\end{gathered}
\end{equation}
\end{minipage}
}

The idea behind \axgd is similar to the dual-averaging version of Nemirovski's mirror prox algorithm \cite{Mirror-Prox-Nemirovski,thegaptechnique}, with the main difference coming from the discretization of the accelerated dynamics in Equation (\ref{eq:ct-amd}) (as opposed to the standard mirror descent dynamics used in \cite{Mirror-Prox-Nemirovski}). As we will show, an exact implicit Euler step would have $\nabla \psi^*(\vz^{(k+1)})$ instead of $\nabla \psi^*(\vzh^{(k)})$ in the third line of \axgd. However, obtaining $\vx^{(k+1)}$ in a such a manner could be computationally prohibitive since $\vz^{(k+1)}$ implicitly depends on $\vx^{(k+1)}$ through its gradient. Instead, we opt for an extra prox-step $\nabla\psi^*(\vzh^{(k)})$ that adds the gradient at an intermediate point $\vxh^{(k)}$ constructed using $\vx^{(k)}$ and $\vz^{(k)}$ from the previous iteration. Thanks to this extra-gradient step, \axgd~can correct the discretization error without using a gradient step.

Convergence proof for \axgd together with the sufficient conditions for obtaining optimal convergence bounds are provided in Section \ref{sec:axgd-conv}. For example, Theorem~\ref{thm:smooth-acc-mp} shows that when the objective function is smooth, \axgd~converges at the optimal rate of $1/k^2$. The analysis of \agd is provided in \cite{thegaptechnique}.

\subsection{Approximate Optimality Gap}\label{sec:agt}

The analysis relies on the construction of an approximate optimality gap $G_t$, which is defined as the difference of an upper bound $U_t$ and a lower bound $L_t$ to the optimal function value $f(\vx^*)$. In particular, for an increasing function of time $t$, $\alpha^{(t)}$, the convergence analysis will work on establishing the following:\\
\vsepfbox{\begin{minipage}[h]{.99\textwidth}\emph{Invariance condition:} $\alpha^{(t)}G_t$ is non-increasing with time $t$.
\end{minipage}}
Such a condition immediately implies: $G_t\leq \frac{\alpha^{(t_0)}}{\alpha^{(t)}}G_{t_0}$, leading to the $\frac{1}{\alpha^{(t)}}$ convergence rate. We sketch the main ideas that relate to the accelerated methods and \axgd in particular here for completeness, while the more general arguments that recover a number of known first-order methods are provided in \cite{thegaptechnique}.

We now describe the upper bound and the lower bound choices, which will take the same form in both continuous time and discrete time domains. To do so, we will rely on the Lebesgue-Stieltjes integration, which allows us to treat continuous and discrete choice of $\alpha^{(t)}$ in a unified manner. Observe that when $\alpha^{(t)}$ is a discrete measure, $\dot{\alpha}^{(t)}$ is a train of (scaled) Dirac Delta functions. Denote $A^{(t)} = \int_{t_0}^t d\alpha^{(\tau)} = \int_{t_0}^t \dot{\alpha}^{(\tau)}d\tau$.

\paragraph*{Upper Bound} As $\vx^*\in X$ is the minimizer of $f(\cdot)$, $f(\vx)$ for any $\vx \in X$ constitutes a valid upper bound. In particular, our choice of the upper bound will be $U_t = f(\vx^{(t)})$, where $\vx^{(t)}$ is the solution maintained by the algorithm at time $t$.

\paragraph*{Lower Bound} More interesting than the upper bound is the construction of a lower bound to $f(\vx^*)$. From convexity of $f$, we have the standard lower-bounding hyperplanes $\forall \vx, \vxh \in X$: $f(\vx) \geq f(\vxh) + \innp{\nabla f(\vxh), \vx - \vxh}$. A natural choice of a lower bound to the optimum at time $t \geq t_0,$ is obtained by averaging such hyperplanes over $[t_0,t]$ according to the measure $\alpha$:
\begin{align} 
f(\vu) \geq  
\frac{ \int_{t_0}^t  f(\vx^{(\tau)}) d\alpha^{(\tau)}}{A^{(t)}} +  \frac{\int_{t_0}^t \innp{\nabla f(\vx^{(\tau)}), \vu - \vx^{(\tau)}} d\alpha^{(\tau)}}{A^{(t)}},\; \forall \vu \in X.\notag
\end{align}
While we could take the minimum over $\vu \in X$ on the right-hand side of this equation as our notion of lower bound, this choice has two serious drawbacks. First, it is non-smooth, and in general not even differentiable, as a function of $t.$  
Second, in continuous-time, it is not defined for our initial time $t_0,$ meaning that we do not have a natural concept of initial lower bound and initial duality gap. (In the discrete time, we can ensure that $\alpha$ contains a Dirac Delta function at $t_0$, which overcomes this issue.) 
We address the first problem by applying regularization, i.e., by adding to both sides of the inequality a regularizer term that is strongly-convex in $\vx$ and then minimizing the right-hand side with respect to $\vu \in X$.\footnote{This is similar to the well-known Moreau-Yosida regularization.}  Without loss of generality, the regularizer can be taken to be the Bregman divergence of a $\sigma$-strongly convex function $\psi$ taken from an input point $\vx^{(t_0).}$ This yields:
\begin{align*}
& f(\vx^*) + \frac{D_{\psi}(\vx^*, \vx^{(t_0)})}{A^{(t)}}\\
&\hspace{.5in} \geq \frac{\int_{t_0}^{t} f(\vx^{(\tau)}) d\alpha^{(\tau)}}{A^{(t)}} +  \frac{\min_{\vu \in X}\left\{\int_{t_0}^{t} \innp{\nabla f(\vx^{(\tau)}), \vu - \vx^{(\tau)}}d\alpha^{(\tau)} + D_{\psi}(\vu, \vx^{(t_0)}) \right\}}{A^{(t)}}.
\end{align*}
To address the second problem, we mix into the $\alpha$-combination of hyperplanes the optimal 
lower bound $f(\vx^*)$ with weight $\alpha^{(t)} - A^{(t)}$ (which is just zero in the discrete time, as in that case $A^{(t)}=\alpha^{(t)}$). Rescaling the normalization factor, we obtain our notion of {\it regularized lower bound}:
\begin{equation} \label{eq:lower}
\begin{aligned}
L_{t} \defeq &\frac{\int_{t_0}^{t} f(\vx^{(\tau)}) d\alpha^{(\tau)}}{\alpha^{(t)}} +  \frac{\min_{\vu \in X}\left\{\int_{t_0}^{t} \innp{\nabla f(\vx^{(\tau)}), \vu - \vx^{(\tau)}} d\alpha^{(\tau)} + D_{\psi}(\vu, \vx^{(t_0)}) \right\}}{\alpha^{(t)}}\\
&+ \frac{(\alpha^{(t)}-A^{(t)})f(\vx^*) - D_{\psi}(\vx^*, \vx^{(t_0)})}{\alpha^{(t)}}.
\end{aligned}
\end{equation}

\subsection{Accelerated Mirror Descent in Continuous Time}\label{sec:continuous}

We now show that the accelerated dynamics can be obtained by enforcing the invariance condition from previous subsection with $\alpha^{(t)}G_t$ being constant; i.e., we enforce that $\frac{d}{dt}(\alpha^{(t)}G_t)=0$. Towards that goal, assume that $\alpha^{(t)}$ is continuously differentiable, and observe that $\alpha^{(t)}-A^{(t)} = \alpha^{(t_0)}$ is constant. To simplify the notation when taking the time derivative of $\alpha^{(t)}G^{(t)}$, we first show the following:
\begin{proposition}\label{prop:arg-min}
Let $\vz^{(t)} = \nabla \psi(\vx^{(t_0)}) - \int_{t_0}^t \nabla f(\vx^{(\tau)})d\alpha^{(\tau)}$. Then:
$$
\nabla \psi^*(\vz^{(t)}) = \arg\min_{\vu \in X}\left\{\int_{t_0}^{t} \innp{\nabla f(\vx^{(\tau)}), \vu - \vx^{(\tau)}} d\alpha^{(\tau)} + D_{\psi}(\vu, \vx^{(t_0)}) \right\}.
$$
\end{proposition}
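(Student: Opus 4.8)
The plan is to recognize the minimization on the right-hand side as (essentially) the definition of a convex conjugate and then invoke Fact~\ref{fact:danskin}. First I would expand the Bregman divergence using Definition~\ref{def:bregman-divergence}: $D_\psi(\vu, \vx^{(t_0)}) = \psi(\vu) - \psi(\vx^{(t_0)}) - \innp{\nabla\psi(\vx^{(t_0)}), \vu - \vx^{(t_0)}}$. Since $\vx^{(t_0)}$ is fixed, the terms $\psi(\vx^{(t_0)})$ and $\innp{\nabla\psi(\vx^{(t_0)}), \vx^{(t_0)}}$ are constants independent of $\vu$ and do not affect the argmin; similarly, the term $-\int_{t_0}^t \innp{\nabla f(\vx^{(\tau)}), \vx^{(\tau)}}d\alpha^{(\tau)}$ inside the objective is a $\vu$-independent constant and can be dropped. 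So the argmin equals $\arg\min_{\vu\in X}\big\{ \innp{\int_{t_0}^t \nabla f(\vx^{(\tau)})d\alpha^{(\tau)}, \vu} - \innp{\nabla\psi(\vx^{(t_0)}), \vu} + \psi(\vu)\big\}$.

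Next I would flip the sign to turn the $\min$ into a $\max$: the expression becomes $\arg\max_{\vu\in X}\big\{ \innp{\nabla\psi(\vx^{(t_0)}) - \int_{t_0}^t \nabla f(\vx^{(\tau)})d\alpha^{(\tau)}, \vu} - \psi(\vu)\big\}$. By the definition of $\vz^{(t)}$ in the statement, the linear coefficient is exactly $\vz^{(t)}$, so this is $\arg\max_{\vu\in X}\{\innp{\vz^{(t)}, \vu} - \psi(\vu)\}$. Then Fact~\ref{fact:danskin}, applied to the $\sigma$-strongly convex differentiable function $\psi$, identifies this argmax as $\nabla\psi^*(\vz^{(t)})$, which is the claimed identity.

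There is no serious obstacle here; the argument is a bookkeeping exercise in dropping $\vu$-independent constants and matching the linear coefficient against the definition of $\vz^{(t)}$. The one point that deserves a word of care is well-definedness of the argmin/argmax: strong convexity of $\psi$ ensures the maximizer is unique (so "$\arg\max$" denotes a single point) and, together with the standing assumption that $\psi$ is differentiable with an easily-computable conjugate, guarantees the maximum is attained in $X$; this is precisely the hypothesis under which Fact~\ref{fact:danskin} is invoked. I would state the computation in two or three displayed lines and close with the citation to Fact~\ref{fact:danskin}.
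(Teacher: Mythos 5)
Your proposal is correct and follows essentially the same route as the paper's own proof: expand $D_\psi(\vu,\vx^{(t_0)})$ via Definition~\ref{def:bregman-divergence}, discard the $\vu$-independent terms, match the linear coefficient with $\vz^{(t)}$, and conclude by Fact~\ref{fact:danskin}. Your extra remark on uniqueness/attainment of the maximizer is a harmless (and reasonable) addition the paper leaves implicit.
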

I.e., $\nabla\psi^*(\vz^{(t)})$ is the argument of the minimum appearing in the definition of lower bound $L_t$. The proof is simple and is provided in the appendix.

Recalling that $U_t = f(\vx^{(t)})$ and using (\ref{eq:lower}) and Danskin's theorem (which allows us to differentiate inside the min):
\begin{align} \label{eq:gapdiff}
\frac{d}{dt}(\alpha^{(t)}G_t) & = \frac{d}{dt}(\alpha^{(t)}f(\vx^{(t)})) - \dot{\alpha}^{(t)}f(\vx^{(t)}) - \dot{\alpha}^{(t)}\innp{\nabla f(\vx^{(t)}), \nabla \psi^*(\vz^{(t)})-\vx^{(t)}} \nonumber  \\
& = \innp{\nabla f(\vx^{(t)}), {\alpha}^{(t)} \vx^{(t)} -\dot{\alpha}^{(t)}\left(\nabla \psi^*(\vz^{(t)})-\vx^{(t)}\right)}.
\end{align}
Hence, to obtain $\frac{d}{dt}(\alpha^{(t)}G_t) = 0$, it suffices to set $\alpha^{(t)}\vx^{(t)} = \dot{\alpha}^{(t)}(\nabla \psi^*(\vz^{(t)})-\vx^{(t)})$, resulting in the accelerated dynamics from \cite{krichene2015accelerated}:
\begin{equation}\label{eq:ct-amd}
\begin{gathered}
\dot{\vz}^{(t)} = - \dot{\alpha}^{(t)}\nabla f(\vx^{(t)}),\\
\dot{\vx}^{(t)} = \dot{\alpha}^{(t)}\frac{\nabla \psi^*(\vz^{(t)})-\vx^{(t)}}{\alpha^{(t)}},\\
\vz^{(t_0)} = \nabla\psi(\vx^{(t_0)}),\; \vx^{(t_0)}\in X \text{ is an arbitrary initial point.}
\end{gathered}
\end{equation}

It is not hard to see that (\ref{eq:ct-amd}) constructs a sequence of points $\vx^{(t)}$ that are feasible, that is, $\vx^{(t)}\in X$. This is because $\vx^{(t)}$ can equivalently be written as $\frac{d}{dt}(\alpha^{(t)}\vx^{(t)}) = \dot{\alpha}^{(t)}\nabla \psi^*(\vz^{(t)})$, which, after integrating over $\tau \in [t_0, t]$, gives $\vx^{(t)} = \frac{\alpha^{(t_0)}}{\alpha^{(t)}}\vx^{(t_0)} + \frac{1}{\alpha^{(t)}}\int_{t_0}^t \nabla \psi^*(\vz^{(\tau)})d\alpha^{(\tau)}$ -- a convex combination of $\vx^{(t_0)}$ and $\nabla \psi^*(\vz^{(\tau)})$ for $\tau \in [t_0, t]$. By (\ref{eq:ct-amd}), $\vx^{(t_0)}\in X$, while $\nabla\psi^*(\vz^{(\tau)})\in X$ by Proposition \ref{prop:arg-min}.

We immediately obtain the following continuous-time convergence guarantee:
\begin{lemma}\label{lemma:ct-amd-conv}
Let $\vx^{(t)}$ evolve according to (\ref{eq:ct-amd}). Then, $\forall t \geq t_0$:
$$
f(\vx^{(t)}) - f(\vx^*) \leq \frac{\alpha^{(t_0)}(f(\vx^{(t_0)})-f(\vx^*))+D_{\psi}(\vx^*, \vx^{(t_0)})}{\alpha^{(t)}}.
$$
\end{lemma}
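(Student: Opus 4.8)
The dynamics \eqref{eq:ct-amd} were constructed precisely so that the rescaled gap $\alpha^{(t)}G_t$ is conserved, so the plan is to run this conservation law from $t_0$ to $t$ and then unpack both endpoints. The first step is to record that, by the computation \eqref{eq:gapdiff} together with the defining relation $\alpha^{(t)}\dot{\vx}^{(t)} = \dot{\alpha}^{(t)}\bigl(\nabla\psi^*(\vz^{(t)}) - \vx^{(t)}\bigr)$ from \eqref{eq:ct-amd} (the second line of \eqref{eq:ct-amd} multiplied through by $\alpha^{(t)}$), the inner product in \eqref{eq:gapdiff} vanishes identically, hence $\frac{d}{dt}\bigl(\alpha^{(t)}G_t\bigr) = 0$. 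Integrating over $[t_0,t]$ gives $\alpha^{(t)}G_t = \alpha^{(t_0)}G_{t_0}$ for all $t \ge t_0$.

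The second step evaluates the conserved quantity at the initial time. At $t = t_0$ every Lebesgue--Stieltjes integral over $[t_0,t_0]$ is $0$ and $A^{(t_0)} = 0$, so in \eqref{eq:lower} only the regularizer and the mixed-in optimal term survive; since $D_{\psi}(\cdot,\vx^{(t_0)}) \ge 0$ with equality at $\vx^{(t_0)}$, the minimum is $0$, and we get $L_{t_0} = f(\vx^*) - D_{\psi}(\vx^*,\vx^{(t_0)})/\alpha^{(t_0)}$. Combined with $U_{t_0} = f(\vx^{(t_0)})$, this yields $\alpha^{(t_0)}G_{t_0} = \alpha^{(t_0)}\bigl(f(\vx^{(t_0)}) - f(\vx^*)\bigr) + D_{\psi}(\vx^*,\vx^{(t_0)})$, i.e.\ exactly the numerator in the claimed bound.

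The third step lower-bounds $G_t$ at a general time by $f(\vx^{(t)}) - f(\vx^*)$; since $U_t = f(\vx^{(t)})$, this is equivalent to the (expected) fact that $L_t \le f(\vx^*)$. Substituting the feasible point $\vu = \vx^*$ into the minimum in \eqref{eq:lower}, the Bregman term it produces cancels the subtracted $-D_{\psi}(\vx^*,\vx^{(t_0)})$, leaving $\alpha^{(t)}L_t \le \int_{t_0}^t\bigl(f(\vx^{(\tau)}) + \innp{\nabla f(\vx^{(\tau)}),\, \vx^* - \vx^{(\tau)}}\bigr)d\alpha^{(\tau)} + (\alpha^{(t)} - A^{(t)})f(\vx^*)$. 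By convexity of $f$ (Definition~\ref{def:convexity}) the integrand is at most $f(\vx^*)$ pointwise, and $\int_{t_0}^t d\alpha^{(\tau)} = A^{(t)}$, so the right-hand side is at most $A^{(t)}f(\vx^*) + (\alpha^{(t)} - A^{(t)})f(\vx^*) = \alpha^{(t)}f(\vx^*)$; dividing by $\alpha^{(t)} > 0$ gives $L_t \le f(\vx^*)$.

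Chaining the three steps, $f(\vx^{(t)}) - f(\vx^*) \le G_t = \alpha^{(t_0)}G_{t_0}/\alpha^{(t)} = \bigl(\alpha^{(t_0)}(f(\vx^{(t_0)}) - f(\vx^*)) + D_{\psi}(\vx^*,\vx^{(t_0)})\bigr)/\alpha^{(t)}$, which is the assertion. The only genuinely delicate point is the first step: justifying the differentiation inside the $\min$ in \eqref{eq:lower} that produces \eqref{eq:gapdiff} (an envelope/Danskin argument, which also relies on Proposition~\ref{prop:arg-min} to identify the minimizer as $\nabla\psi^*(\vz^{(t)})\in X$) and checking that $\alpha^{(t)} - A^{(t)} = \alpha^{(t_0)}$ is constant so that no spurious term appears when differentiating; everything after \eqref{eq:gapdiff} is bookkeeping.
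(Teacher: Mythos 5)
Your proposal is correct and follows essentially the same route as the paper: it invokes the conservation law $\frac{d}{dt}(\alpha^{(t)}G_t)=0$ established via \eqref{eq:gapdiff} and the dynamics \eqref{eq:ct-amd}, evaluates $\alpha^{(t_0)}G_{t_0}$, and uses $f(\vx^{(t)})-f(\vx^*)\le G_t$. The only difference is that you spell out the verification that $L_t \le f(\vx^*)$ and the computation of $G_{t_0}$, which the paper states without detail.
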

\begin{proof}
We have already established that $\frac{d}{dt}(\alpha^{(t)}G^{(t)})=0$, and, therefore, $f(\vx^{(t)})-f(\vx^*)\leq G_t = \frac{\alpha^{(t_0)}}{\alpha^{(t)}}G_{t_0}$. Observing that $G_{t_0} = f(\vx^{(t_0)}) - f(\vx^*) + D_{\psi}(\vx^*, \vx^{(t_0)})/\alpha^{(t_0)}$, the proof follows.
\end{proof}

\subsection{Discretization}

As discussed in Section \ref{sec:agt}, our construction of the approximate optimality gap is valid both in the continuous time and in the discrete time domain. To understand where the discretization error occurs, we make the following observations. First, the upper bound does not involve any integration, and thus cannot incur a discretization error. In the lower bound (\ref{eq:lower}), the role of the first integral is only to perform weighted averaging, which is the same in the continuous time and in the discrete time, and, therefore, does not incur a discretization error. The terms that are not integrated over look the same whether or not $\alpha^{(t)}$ is discrete. Therefore, the only term that can incur the discretization error is the integral under the min: $I^{(t_0, t)} = \int_{t_0}^t \innp{\nabla f(\vx^{(\tau)}), \nabla \psi^*(\vz^{(t)})-\vx^{(\tau)}}d\alpha^{(\tau)}$.   

As mentioned before, when $\alpha$ is a discrete measure, we can express it as $\alpha^{(t)} = \sum_{i=1}^{\infty} a_i \delta(t-(t_0+i-1))$, where $\delta(\cdot)$ denotes the Dirac Delta function and $a_i$'s are positive. Then $A^{(t)} = \int_{t_0}^t d\alpha^{(\tau)}= \sum_{i: t_0+ i-1 \leq t} a_i$. 
To simplify the notation, we will use $i \in \mathbb{Z}_+$ to denote the discrete time points corresponding to $t_0 + i -1$ on the continuous line. Therefore, the discretization error incurred in $A^{(t)}L_t$ between the discrete time points $i$ and $i+1$ (understood as integrating from $i^+$ to $(i+1)^+$) is $I^{(i, i+1)}-I_c^{(i, i+1)}$, where $I_c^{(i, i+1)}$ is the continuous approximation of $I^{(i, i+1)}$ (i.e., we allow continuous integration rules in $I_c^{(i, i+1)}$). We can now establish the following bound on the discretization error. 

\begin{lemma}\label{lemma:discr-error}
Let $A_{i+1}G_{i+1}-A_iG_i \equiv E_{i+1}$ be the discretization error. Then $$G_{k} = \frac{A_1}{A_k}G_1 + \frac{\sum_{i=1}^k E_i}{A_k}$$ and $$E_{i+1} \leq \innp{\nabla f(\vx^{(i+1)}), A^{(i+1)}\vx^{(i+1)}-A^{(i)}\vx^{(i)}-a_{i+1}\nabla\psi^*(\vz^{(i+1)})} - D_{\psi^*}(\vz^{(i)}, \vz^{(i+1)}).$$
\end{lemma}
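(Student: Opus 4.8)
The plan is to unwind the definitions of $G_i$ and $L_i$, track where the integral $I^{(i,i+1)}$ enters, and replace its continuous evaluation by its discrete (Dirac-comb) evaluation, bounding the difference by a Bregman term. First I would establish the recursion $G_k = \frac{A_1}{A_k}G_1 + \frac{1}{A_k}\sum_{i=1}^k E_i$: this is purely algebraic. Writing $A_{i+1}G_{i+1} - A_i G_i = E_{i+1}$ and telescoping from $i=1$ to $k-1$ gives $A_k G_k = A_1 G_1 + \sum_{i=2}^{k} E_i$; absorbing $E_1$ by defining $A_0 G_0 = 0$ (or noting the convention in the statement) yields the displayed identity. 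So the real content is the bound on $E_{i+1}$.

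For that, I would split $A_{i+1}G_{i+1} - A_i G_i = (A_{i+1}U_{i+1} - A_i U_i) - (A_{i+1}L_{i+1} - A_i L_i)$. Since $U_t = f(\vx^{(t)})$ and, in discrete time, $A^{(t)} = \alpha^{(t)}$ and $\alpha^{(t)} - A^{(t)} = 0$, the upper-bound increment is just $a_{i+1} f(\vx^{(i+1)})$ (using $A_{i+1} = A_i + a_{i+1}$ and $U_{i+1} = f(\vx^{(i+1)})$). The lower-bound increment, using Proposition~\ref{prop:arg-min} to identify the minimizer as $\nabla\psi^*(\vz^{(i+1)})$, is
\[
A_{i+1}L_{i+1} - A_i L_i = a_{i+1} f(\vx^{(i+1)}) + \Big(I^{(t_0,i+1)} + D_\psi(\nabla\psi^*(\vz^{(i+1)}),\vx^{(t_0)})\Big) - \Big(I^{(t_0,i)} + D_\psi(\nabla\psi^*(\vz^{(i)}),\vx^{(t_0)})\Big),
\]
where I am using that the minimum over $\vu$ is attained at $\nabla\psi^*(\vz^{(\cdot)})$ and evaluating the objective there. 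The $a_{i+1}f(\vx^{(i+1)})$ terms cancel against the upper bound, so $E_{i+1}$ equals minus the increment of $\big(I^{(t_0,\cdot)} + D_\psi(\nabla\psi^*(\vz^{(\cdot)}),\vx^{(t_0)})\big)$ — except that we must be careful: $I^{(t_0,i+1)}$ contains $\nabla\psi^*(\vz^{(i+1)})$ under the integral (evaluated at the current endpoint), whereas the "correct" telescoping quantity would use the same $\vz$ inside the integrand as in the Bregman term. This mismatch between the endpoint value $\vz^{(i+1)}$ appearing everywhere in $L_{i+1}$ versus the value $\vz^{(i)}$ in $L_i$ is precisely the discretization error.

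The cleanest route is therefore: use the three-point identity for Bregman divergences (see Appendix~\ref{sec:breg-div-prop}) together with the fact that $\nabla\psi^*(\vz^{(i+1)})$ is the argmax defining $\psi^*(\vz^{(i+1)})$, to rewrite $\langle \nabla f(\vx^{(i+1)}), \nabla\psi^*(\vz^{(i+1)})\rangle$-type terms and convert the difference of the minimized lower-bound objectives into (i) the "ideal" discrete increment, which matches the continuous computation in Equation~\ref{eq:gapdiff} and contributes the first inner-product term $\langle \nabla f(\vx^{(i+1)}), A^{(i+1)}\vx^{(i+1)} - A^{(i)}\vx^{(i)} - a_{i+1}\nabla\psi^*(\vz^{(i+1)})\rangle$, plus (ii) a leftover term of the form $\psi^*(\vz^{(i)}) - \psi^*(\vz^{(i+1)}) - \langle \nabla\psi^*(\vz^{(i+1)}), \vz^{(i)} - \vz^{(i+1)}\rangle = -D_{\psi^*}(\vz^{(i)},\vz^{(i+1)})$, using $\vz^{(i+1)} - \vz^{(i)} = -a_{i+1}\nabla f(\vx^{(i+1)})$ and $\nabla\psi^*(\vz^{(i)}) = \arg\max\{\langle\vz^{(i)},\vx\rangle - \psi(\vx)\}$ (Fact~\ref{fact:danskin}). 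Collecting (i) and (ii) gives exactly the claimed bound, with the inequality (rather than equality) arising because in (i) the argmin for $L_i$ is evaluated at its own optimizer $\nabla\psi^*(\vz^{(i)})$ while we compare against evaluating the $L_{i+1}$-objective at that same point — i.e. we drop one nonnegative slack coming from sub-optimality.

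The main obstacle will be bookkeeping the two different roles of $\vz$ in $L_t$ — the $\vz^{(t)}$ that sits inside the integrand $\nabla\psi^*(\vz^{(t)}) - \vx^{(\tau)}$ at the \emph{current} endpoint versus the history of $\vx^{(\tau)}$, $\nabla f(\vx^{(\tau)})$ under the integral — and making sure the conjugate-function terms reassemble into a clean $-D_{\psi^*}(\vz^{(i)},\vz^{(i+1)})$. Concretely, one should express $I^{(t_0,i+1)} + D_\psi(\nabla\psi^*(\vz^{(i+1)}),\vx^{(t_0)})$ as a function of $\vz^{(i+1)}$ alone (via $\psi^*$ and linear terms in $\vz^{(i+1)}$) using Proposition~\ref{prop:arg-min} and the definition of the conjugate; then the increment becomes a difference of such expressions at $\vz^{(i+1)}$ and $\vz^{(i)}$, at which point convexity of $\psi^*$ and the update $\vz^{(i+1)} = \vz^{(i)} - a_{i+1}\nabla f(\vx^{(i+1)})$ finish the estimate. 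Everything else is routine manipulation.
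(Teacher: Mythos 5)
Your overall route---computing $A_{i+1}G_{i+1}-A_iG_i$ directly under the discrete measure, identifying the minimizer in $L_t$ as $\nabla\psi^*(\vz^{(t)})$ via Proposition~\ref{prop:arg-min}, and reassembling the conjugate terms into a $-D_{\psi^*}(\vz^{(i)},\vz^{(i+1)})$ using $\vz^{(i+1)}=\vz^{(i)}-a_{i+1}\nabla f(\vx^{(i+1)})$---is legitimate, and it is exactly the alternative the paper mentions in the remark after its proof (the paper instead compares the discrete integral $I^{(i,i+1)}$ with its continuous counterpart $I_c^{(i,i+1)}$ evaluated along the dynamics~(\ref{eq:ct-amd})). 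However, there is a genuine gap in your bookkeeping at the upper bound. The increment of the scaled upper bound is $A_{i+1}U_{i+1}-A_iU_i=A_{i+1}f(\vx^{(i+1)})-A_if(\vx^{(i)})=a_{i+1}f(\vx^{(i+1)})+A_i\bigl(f(\vx^{(i+1)})-f(\vx^{(i)})\bigr)$, not ``just $a_{i+1}f(\vx^{(i+1)})$'' as you assert; only the first piece cancels against the lower-bound increment. The leftover $A_i\bigl(f(\vx^{(i+1)})-f(\vx^{(i)})\bigr)$ is precisely where convexity of $f$ must be invoked, $f(\vx^{(i+1)})-f(\vx^{(i)})\leq\innp{\nabla f(\vx^{(i+1)}),\vx^{(i+1)}-\vx^{(i)}}$; this step is the sole source of the inequality in the lemma and of the $A^{(i)}(\vx^{(i+1)}-\vx^{(i)})$ portion of the inner product $\innp{\nabla f(\vx^{(i+1)}),A^{(i+1)}\vx^{(i+1)}-A^{(i)}\vx^{(i)}-a_{i+1}\nabla\psi^*(\vz^{(i+1)})}$. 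As written, your cancellation forces $E_{i+1}$ to equal minus the increment of the minimized lower-bound objective, which works out to $a_{i+1}\innp{\nabla f(\vx^{(i+1)}),\vx^{(i+1)}-\nabla\psi^*(\vz^{(i+1)})}-D_{\psi^*}(\vz^{(i)},\vz^{(i+1)})$---not the claimed bound---and convexity of $f$ never appears anywhere in your argument, so the missing $A^{(i)}$ term has no source.

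Two smaller points. Your explanation of where the ``$\leq$'' comes from (dropping a nonnegative suboptimality slack by evaluating the $L_{i+1}$-objective at the $L_i$-minimizer) is misplaced in the route you describe: since the minimum is evaluated exactly, $\min_{\vu\in X}\{\cdot\}=-\psi^*(\vz^{(t)})+(\text{terms that telescope})$, the lower-bound increment is an identity and no slack is needed; the only inequality is the convexity step above. There is also a sign slip: $\psi^*(\vz^{(i)})-\psi^*(\vz^{(i+1)})-\innp{\nabla\psi^*(\vz^{(i+1)}),\vz^{(i)}-\vz^{(i+1)}}$ equals $+D_{\psi^*}(\vz^{(i)},\vz^{(i+1)})$ by Definition~\ref{def:bregman-divergence}, although the term does enter $E_{i+1}$ with the correct negative sign once the increments are subtracted in the right order. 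Finally, your handling of $E_1$ in the telescoping identity does not quite work as stated: with $A_0G_0=0$ one gets $\sum_{i=1}^kE_i=A_kG_k$, which would double-count $A_1G_1$ in the displayed identity; the sum should effectively start at $i=2$ (or $E_1$ be taken as zero), a looseness shared with the paper's own statement and immaterial for its use, since only $E_i\leq 0$ for $i\geq 2$ is needed later.
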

\begin{proof}
The first part of the lemma follows by summing over $1\leq i\leq k$. For the second part, we have already argued that $E_{i+1} = I_c^{(i, i+1)}-I^{(i, i+1)}$. For the discrete integral $I^{(i, i+1)}$, as $\dot{\alpha}^{(t)}$ just samples the function under the integral at point $i+1$, we have:
\begin{equation}\label{eq:I-i-i+1}
I^{(i, i+1)} = a_{i+1}\innp{\nabla f(\vx^{(i+1)}), \nabla \psi^*(\vz^{(i+1)})-\vx^{(i+1)}}.
\end{equation}
For the continuous integral, using (\ref{eq:ct-amd}) and integration by parts:
\begin{align}
I_c^{(i, i+1)} &= \int_{i}^{i+1}\alpha^{(\tau)}\innp{\nabla f(\vx^{(\tau)}), \dot{\vx}^{(\tau)}}d\tau + \int_{i}^{i+1}\innp{\nabla f(\vx^{(\tau)}), \nabla \psi^*(\vz^{(i+1)})-\nabla\psi^*(\vz^{(\tau)})}d\alpha^{(\tau)}\notag\\
&= A^{(i)}(f(\vx^{(i+1)})-f(\vx^{(i)})) - \int_{i}^{i+1}\innp{\dot{\vz}^{(\tau)}, \nabla \psi^*(\vz^{(i+1)})-\nabla\psi^*(\vz^{(\tau)})}d\tau\notag\\
&= A^{(i)}(f(\vx^{(i+1)})-f(\vx^{(i)})) - D_{\psi^*}(\vz^{(i)}, \vz^{(i+1)}), \label{eq:I_c-i-i+1}
\end{align}
where we have used $\dot{\vz}^{(\tau)}=-\dot{\alpha}^{(\tau)}\nabla f(\vx^{(\tau)})$,  $\nabla_{\vz^{(\tau)}} D_{\psi^*}(\vz^{(\tau)}, \vz^{(i+1)}) = \nabla\psi^*(\vz^{(\tau)})-\nabla\psi^*(\vz^{(i+1)})$, and $D_{\psi^*}(\vz^{(i)}, \vz^{(i)})=0$.

By convexity of $f$, $f(\vx^{(i+1)})-f(\vx^{(i)})\leq \innp{\nabla f(\vx^{(i+1)}), \vx^{(i+1)}-\vx^{(i)}}$. Combining with (\ref{eq:I-i-i+1}) and (\ref{eq:I_c-i-i+1}):
$$
E_{i+1} \leq \innp{\nabla f(\vx^{(i+1)}), A^{(i+1)}\vx^{(i+1)}-A^{(i)}\vx^{(i)}-a_{i+1}\nabla\psi^*(\vz^{(i+1)})} - D_{\psi^*}(\vz^{(i)}, \vz^{(i+1)}),
$$
as claimed.
\end{proof}
We remark that the same result for the discretization error can be obtained by directly computing $A_{i+1}G_{i+1}-A_{i}G_{i}$ under a discrete measure $\alpha$ (where all the integrals in the definition of the duality gap are replaced by summations). We have chosen to work with the integration error described above to demonstrate the cause of the discretization error.

We now describe how \axgd~cancels out the discretization error by (approximately) implementing implicit Euler discretization of $\dot{\vx}^{(t)}$.

\paragraph*{Implicit Euler Discretization} Implicit Euler discretization is an abstract discretization method which defines the next iterate $\vx^{(k+1)}$ implicitly as a function of the gradient at $\vx^{(k+1)}$. In the case of the \amd~dynamics, implicit Euler discretization yields the following algorithm: let $\vx^{(1)}\in X$ be an arbitrary initial point that satisfies $\vx^{(1)} = \nabla \psi^*(\vz^{(1)})$, where $\vz^{(1)} = \nabla \psi(\vx^{(1)}) - \nabla f(\vx^{(1)})$; for all $k \geq 1$
\begin{align}\label{eq:implicit-euler-step}
\begin{cases}
\vz^{(k+1)} = \vz^{(k)} - a_{k+1} \nabla f(\vx^{(k+1)}),\\
\vx^{(k+1)} = \frac{A_k}{A_{k+1}}\vx^{(k)} + \frac{a_{k+1}}{A_{k+1}}\nabla \psi^*(\vz^{(k+1)})
\end{cases}
\end{align}
Observe that $\vx^{(k+1)}$ in (\ref{eq:implicit-euler-step}) exactly sets the inner product in $E_{i+1}$ (Lemma \ref{lemma:discr-error}) to zero, leaving only the negative term $-D_{\psi^*}(\vz^{(i)}, \vz^{(i+1)})$. 
While this discretization is not computationally feasible in practice, as it requires solving for the implicitly defined $\vx^{(k+1)},$ it also boasts a negative discretization error, i.e., it converges faster than the continuous-time \amd. Ultimately, we will use this extra slack to trade-off the error arising from an {\em approximate} implicit discretization.

\subsection{Convergence of AXGD}\label{sec:axgd-conv}

A standard way to implement implicit Euler discretization in the solution of ODEs~\cite{hairer1993solving} is to replace the exact solution of the implicit equation with a small number of fixed point iterations of the same equation. In our case, the implicit equation can be written as:
\begin{align}
\vx^{(k+1)} = \frac{A_k}{A_{k+1}}\vx^{(k)}+\frac{a_{k+1}}{A_{k+1}}\nabla\psi^*(\vz^{(k)}- a_{k+1} \nabla f(\vx^{(k+1)})).\notag 
\end{align}
Two steps of the fixed-point iteration yield the following updates, which are exactly those performed by \axgd:
\begin{align}
\begin{cases}
\vxh^{(k)} = \frac{A_k}{A_{k+1}}\vx^{(k)}+\frac{a_{k+1}}{A_{k+1}}\nabla\psi^*(\vz^{(k)}).\notag \\
\vx^{(k+1)} = \frac{A_k}{A_{k+1}}\vx^{(k)}+\frac{a_{k+1}}{A_{k+1}}\nabla\psi^*(\vz^{(k)}- a_{k+1} \nabla f(\vxh^{(k)}))
\end{cases}
\end{align}

We can now analyze \axgd~as producing an approximate solution to the implicit Euler discretization problem. The following lemma gives a general bound on the convergence of \axgd~for a convex and differentiable $f(\cdot)$ without additional assumptions. The only (mild) difference is replacing $D_{\psi}(\vx, \vx^{(1)})$ and $D_{\psi}(\vx^*, \vx^{(1)})$ by $D_{\psi}(\vx, \vxh^{(0)})$ and $D_{\psi}(\vx^*, \vxh^{(0)})$, since we start from the ``intermediate'' point $\vxh^{(0)}$. This change is only important for bounding the initial gap $G_1$; everything else is the same as before.

\begin{lemma}\label{lemma:acc-mirror-prox} %
Consider the \axgd~ algorithm as described in Equation~(\ref{eq:dt-acc-mp-step}), starting from an arbitrary point $\vxh^{(0)}$ with $\vz^{(0)} = \nabla \psi(\vxh^{(0)})$ and $A_0 = 0$. Then the error from Lemma~\ref{lemma:discr-error} is bounded by:
\begin{align*}
E_{i+1} \leq & a_{i+1}\innp{\nabla f(\vx^{(i+1)}) - \nabla f(\vxh^{(i)}), \nabla \psi^*(\vzh^{(i)})- \nabla \psi^*(\vz^{(i+1)})}\\
&- D_{\psi^*}(\vzh^{(i)}, \vz^{(i+1)}) - D_{\psi^*}(\vz^{(i)}, \vzh^{(i)}).
\end{align*}

\end{lemma}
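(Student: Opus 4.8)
The plan is to start from the bound in Lemma~\ref{lemma:discr-error}, namely
\[
E_{i+1} \leq \innp{\nabla f(\vx^{(i+1)}), A_{i+1}\vx^{(i+1)}-A_i\vx^{(i)}-a_{i+1}\nabla\psi^*(\vz^{(i+1)})} - D_{\psi^*}(\vz^{(i)}, \vz^{(i+1)}),
\]
and to substitute the \axgd~updates from Equation~(\ref{eq:dt-acc-mp-step}) into the inner product. The key algebraic observation is that the third line of \axgd~gives $A_{i+1}\vx^{(i+1)} = A_i\vx^{(i)} + a_{i+1}\nabla\psi^*(\vzh^{(i)})$, so that
\[
A_{i+1}\vx^{(i+1)}-A_i\vx^{(i)}-a_{i+1}\nabla\psi^*(\vz^{(i+1)}) = a_{i+1}\bigl(\nabla\psi^*(\vzh^{(i)}) - \nabla\psi^*(\vz^{(i+1)})\bigr).
\]
This immediately turns the inner-product term into $a_{i+1}\innp{\nabla f(\vx^{(i+1)}), \nabla\psi^*(\vzh^{(i)}) - \nabla\psi^*(\vz^{(i+1)})}$. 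The first target term in the claim is $a_{i+1}\innp{\nabla f(\vx^{(i+1)}) - \nabla f(\vxh^{(i)}), \nabla\psi^*(\vzh^{(i)}) - \nabla\psi^*(\vz^{(i+1)})}$, so what I need to do next is account for the ``missing'' piece $a_{i+1}\innp{\nabla f(\vxh^{(i)}), \nabla\psi^*(\vzh^{(i)}) - \nabla\psi^*(\vz^{(i+1)})}$.

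Here I would use the dual-update relations to rewrite $a_{i+1}\nabla f(\vxh^{(i)}) = \vz^{(i)} - \vzh^{(i)}$ (from the second line of \axgd), so that the missing piece becomes $\innp{\vz^{(i)} - \vzh^{(i)}, \nabla\psi^*(\vzh^{(i)}) - \nabla\psi^*(\vz^{(i+1)})}$. The plan is then to invoke the three-point identity for Bregman divergences of $\psi^*$: for any points $\vz^{(i)}, \vzh^{(i)}, \vz^{(i+1)}$,
\[
\innp{\nabla\psi^*(\vzh^{(i)}) - \nabla\psi^*(\vz^{(i+1)}), \vzh^{(i)} - \vz^{(i)}} = D_{\psi^*}(\vz^{(i)}, \vz^{(i+1)}) - D_{\psi^*}(\vz^{(i)}, \vzh^{(i)}) - D_{\psi^*}(\vzh^{(i)}, \vz^{(i+1)}).
\]
(This is the standard Bregman three-point equality applied with the three arguments in the appropriate order; it is among the properties collected in Appendix~\ref{sec:breg-div-prop}.) Substituting this into the expression and combining with the leftover $-D_{\psi^*}(\vz^{(i)}, \vz^{(i+1)})$ term from Lemma~\ref{lemma:discr-error}, the $D_{\psi^*}(\vz^{(i)}, \vz^{(i+1)})$ terms cancel exactly, and I am left with precisely
\[
E_{i+1} \leq a_{i+1}\innp{\nabla f(\vx^{(i+1)}) - \nabla f(\vxh^{(i)}), \nabla\psi^*(\vzh^{(i)}) - \nabla\psi^*(\vz^{(i+1)})} - D_{\psi^*}(\vzh^{(i)}, \vz^{(i+1)}) - D_{\psi^*}(\vz^{(i)}, \vzh^{(i)}),
\]
which is the claimed bound.

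The main obstacle I anticipate is purely bookkeeping: getting the argument order in the Bregman three-point identity exactly right so that the signs and the cancellation of $D_{\psi^*}(\vz^{(i)}, \vz^{(i+1)})$ work out, and making sure the identifications $a_{i+1}\nabla f(\vxh^{(i)}) = \vz^{(i)} - \vzh^{(i)}$ and $A_{i+1}\vx^{(i+1)} = A_i\vx^{(i)} + a_{i+1}\nabla\psi^*(\vzh^{(i)})$ are read off the correct lines of (\ref{eq:dt-acc-mp-step}) with $A_0 = 0$ handled at the base step. A secondary subtlety is that Lemma~\ref{lemma:discr-error} already used convexity of $f$ once (to pass from $f(\vx^{(i+1)}) - f(\vx^{(i)})$ to $\innp{\nabla f(\vx^{(i+1)}), \vx^{(i+1)} - \vx^{(i)}}$), so no further inequality is needed here — the remainder of the argument is exact identities — which is worth stating explicitly so the reader sees where the only slackness enters. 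No smoothness of $f$ is used at this stage, consistent with the lemma being stated for general convex differentiable $f$.
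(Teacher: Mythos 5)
Your proposal is correct and follows essentially the same route as the paper's proof: substitute the third \axgd~update into the inner product from Lemma~\ref{lemma:discr-error}, add and subtract $\nabla f(\vxh^{(i)})$, rewrite $a_{i+1}\nabla f(\vxh^{(i)}) = \vz^{(i)} - \vzh^{(i)}$, and apply the Bregman three-point identity (Proposition~\ref{prop:magic-identity}) so that the $D_{\psi^*}(\vz^{(i)},\vz^{(i+1)})$ terms cancel. The only blemish is a sign slip in your displayed identity, which pairs the gradient difference with $\vzh^{(i)} - \vz^{(i)}$ instead of $\vz^{(i)} - \vzh^{(i)}$; with the correct orientation it reads $\innp{\vz^{(i)}-\vzh^{(i)},\, \nabla\psi^*(\vzh^{(i)})-\nabla\psi^*(\vz^{(i+1)})} = D_{\psi^*}(\vz^{(i)},\vz^{(i+1)}) - D_{\psi^*}(\vzh^{(i)},\vz^{(i+1)}) - D_{\psi^*}(\vz^{(i)},\vzh^{(i)})$, which is exactly what your final cancellation step uses.
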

\begin{proof}
From Lemma \ref{lemma:discr-error}: 
\begin{align}
E_{i+1} \leq &  a_{i+1} \innp{\nabla f(\vx^{(i+1)}),\nabla \psi^*(\vzh^{(i)}) - \nabla \psi^*(\vz^{(i+1)})} - D_{\psi^*}(\vz^{(i)}, \vz^{(i+1)}) \notag\\
 = & a_{i+1} \innp{\nabla f(\vx^{(i+1)}) - \nabla f(\vxh^{(i)}) + \nabla f(\vxh^{(i)}),\nabla \psi^*(\vzh^{(i)}) - \nabla \psi^*(\vz^{(i+1)})}\notag\\
 &- D_{\psi^*}(\vz^{(i)}, \vz^{(i+1)}). \notag 
\end{align}
We now use the fact that $a_{i+1}f(\vxh^{(i)}) = \vz^{(i)} - \vzh^{(i)}$ together with the standard triangle-inequality for Bregman divergences (see Proposition~\ref{prop:magic-identity}) to show that:
\begin{align}
a_{i+1}\innp{\nabla f(\vxh^{(i)}), \nabla \psi^*(\vzh^{(i)}) - \nabla \psi^*(\vz^{(i+1)})} = \innp{\vz^{(i)} - \vzh^{(i)},\nabla \psi^*(\vzh^{(i)}) - \nabla \psi^*(\vz^{(i+1)})} \notag \\
= D_{\psi^*}(\vz^{(i)}, \vz^{(i+1)}) - D_{\psi^*}(\vzh^{(i)}, \vz^{(i+1)}) - D_{\psi^*}(\vz^{(i)}, \vzh^{(i)}),\notag
\end{align}
Combining the results of the last two equations, we get the claimed bound on the error.
\end{proof}

%
%
%
\subsection{Smooth Minimization with AXGD}

We show that \axgd~achieves the asymptotically optimal convergence rate of $1/k^2$ for the minimization of an $L$-smooth convex objective $f(\cdot)$ by applying  Lemma \ref{lemma:acc-mirror-prox}. The crux of the proof is that we can take sufficiently large steps while keeping the error from Lemma~\ref{lemma:acc-mirror-prox} non-positive. In other words, we are able to move quickly through the continuous evolution of \amd~by taking large discrete steps.

\begin{theorem}\label{thm:smooth-acc-mp}
Let $f:X\rightarrow \mathbb{R}$ be an $L$-smooth convex function and let $\vx^{(k)}, \vz^{(k)}, \vxh^{(k)}, \vzh^{(k)}$ be updated according to the \axgd~algorithm in Equation~(\ref{eq:dt-acc-mp-step}), starting from an arbitrary initial point $\vxh^{(0)}\in X$ with the following initial conditions: $\vz^{(0)} = \nabla\psi(\vxh^{(0)})$ and $A_0 = 0$. Let $\psi:X\rightarrow \mathbb{R}$ be $\sigma$-strongly convex. If %
$\frac{{a_{k}}^2}{A_k} \leq \frac{\sigma}{L},$
then for all $k \geq 1$, 
$$
f(\vx^{(k)}) - f(\vx^*) \leq \frac{D_{\psi}(\vx^*, \vxh^{(0)})}{A_k}.
$$
In particular, if $a_k = \frac{k+1}{2}\cdot \frac{\sigma}{L}$ and $\psi(\vx) = \frac{\sigma}{2}\|\vx\|^2$, then:
$$
f(\vx^{(k)})-f(\vx^*) \leq \frac{2L}{(k+1)^2}\|\vx^* - \vxh^{(0)}\|^2.
$$
\end{theorem}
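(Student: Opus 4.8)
The plan is to show that, under the step-size condition $a_k^2/A_k \le \sigma/L$, every per-step discretization error $E_{i+1}$ is non-positive, and then invoke the telescoping of Lemma~\ref{lemma:discr-error} to collapse the bound to the initial gap. First I would start from the general estimate of Lemma~\ref{lemma:acc-mirror-prox},
\[
E_{i+1} \le a_{i+1}\innp{\nabla f(\vx^{(i+1)}) - \nabla f(\vxh^{(i)}),\, \nabla\psi^*(\vzh^{(i)}) - \nabla\psi^*(\vz^{(i+1)})} - D_{\psi^*}(\vzh^{(i)}, \vz^{(i+1)}) - D_{\psi^*}(\vz^{(i)}, \vzh^{(i)}),
\]
and bound the inner product by Cauchy--Schwarz with respect to the dual pair $\|\cdot\|,\|\cdot\|_*$. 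Then $L$-smoothness gives $\|\nabla f(\vx^{(i+1)}) - \nabla f(\vxh^{(i)})\|_* \le L\|\vx^{(i+1)} - \vxh^{(i)}\|$, and subtracting the first and third lines of the \axgd~updates in~(\ref{eq:dt-acc-mp-step}) yields the identity $\vx^{(i+1)} - \vxh^{(i)} = \frac{a_{i+1}}{A_{i+1}}\big(\nabla\psi^*(\vzh^{(i)}) - \nabla\psi^*(\vz^{(i)})\big)$, which rewrites the smoothness factor in terms of prox-image differences.

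Next I would use the strong convexity of $\psi$. The standard Bregman identity $D_{\psi^*}(\vz, \vz') = D_{\psi}(\nabla\psi^*(\vz'), \nabla\psi^*(\vz))$ (Appendix~\ref{sec:breg-div-prop}) together with $\sigma$-strong convexity gives $D_{\psi^*}(\vz, \vz') \ge \tfrac{\sigma}{2}\|\nabla\psi^*(\vz) - \nabla\psi^*(\vz')\|^2$, so both negative terms in the error bound become negative multiples of squared norms of prox-image differences. Writing $P = \|\nabla\psi^*(\vzh^{(i)}) - \nabla\psi^*(\vz^{(i+1)})\|$ and $Q = \|\nabla\psi^*(\vz^{(i)}) - \nabla\psi^*(\vzh^{(i)})\|$, the estimates collapse to $E_{i+1} \le \frac{L a_{i+1}^2}{A_{i+1}} PQ - \frac{\sigma}{2}P^2 - \frac{\sigma}{2}Q^2$; invoking $\frac{a_{i+1}^2}{A_{i+1}} \le \frac{\sigma}{L}$ and completing the square gives $E_{i+1} \le -\frac{\sigma}{2}(P-Q)^2 \le 0$.

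With $E_{i+1}\le 0$ in hand, Lemma~\ref{lemma:discr-error} (telescoping $A_{i+1}G_{i+1} - A_i G_i = E_{i+1}$) gives $A_k G_k = A_1 G_1 + \sum_{i=2}^{k} E_i \le A_1 G_1$. It remains to bound the initial gap: expanding $L_1$ from~(\ref{eq:lower}) (with $A_1 = a_1 = \alpha^{(1)}$ the $f(\vx^*)$-weighted term vanishes), identifying the minimizer in its definition via Proposition~\ref{prop:arg-min}, and applying the three-point Bregman identity (Proposition~\ref{prop:magic-identity}) exactly as in the proof of Lemma~\ref{lemma:acc-mirror-prox} — equivalently, running the previous paragraph at $i=0$ — yields $A_1 G_1 \le D_{\psi}(\vx^*, \vxh^{(0)})$; this is the only place the start from the intermediate point $\vxh^{(0)}$ (rather than $\vx^{(1)}$) enters. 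Since $f(\vx^{(k)}) = U_k \ge f(\vx^*) \ge L_k$, this gives $f(\vx^{(k)}) - f(\vx^*) \le G_k \le D_{\psi}(\vx^*, \vxh^{(0)})/A_k$. For the instantiation, $a_k = \tfrac{k+1}{2}\cdot\tfrac{\sigma}{L}$ gives $A_k = \sum_{j=1}^{k} a_j = \tfrac{\sigma}{L}\cdot\tfrac{k(k+3)}{4}$, so $\tfrac{a_k^2}{A_k} = \tfrac{\sigma}{L}\cdot\tfrac{(k+1)^2}{k(k+3)} \le \tfrac{\sigma}{L}$ for every $k \ge 1$ (as $(k+1)^2 \le k(k+3)$), and with $\psi(\vx) = \tfrac{\sigma}{2}\|\vx\|^2$ one has $D_{\psi}(\vx^*, \vxh^{(0)}) = \tfrac{\sigma}{2}\|\vx^* - \vxh^{(0)}\|^2$, yielding $f(\vx^{(k)}) - f(\vx^*) \le \tfrac{2L}{k(k+3)}\|\vx^* - \vxh^{(0)}\|^2 \le \tfrac{2L}{(k+1)^2}\|\vx^* - \vxh^{(0)}\|^2$.

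The main obstacle is the second paragraph: making the discretization error genuinely non-positive requires Cauchy--Schwarz, $L$-smoothness, and the two strong-convexity lower bounds on the $D_{\psi^*}$ terms to combine so that the only surviving adverse coefficient is exactly $L a_{i+1}^2/A_{i+1}$, which the hypothesis is precisely designed to dominate (after an AM--GM step). The computation of the initial gap $A_1 G_1$ is a secondary point of care, since one must correctly track the regularizer $D_{\psi}(\cdot,\vxh^{(0)})$ and the degenerate base case $A_0 = 0$; everything after that is routine telescoping and arithmetic.
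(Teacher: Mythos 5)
Your proposal is correct and follows essentially the same route as the paper's proof: bound the per-step error from Lemma~\ref{lemma:acc-mirror-prox} via Cauchy--Schwarz plus $L$-smoothness, the step identity $\vx^{(i+1)}-\vxh^{(i)}=\frac{a_{i+1}}{A_{i+1}}\bigl(\nabla\psi^*(\vzh^{(i)})-\nabla\psi^*(\vz^{(i)})\bigr)$, strong convexity through Proposition~\ref{prop:cvx-conj-bd-is-strongly-cvx-too}, and an AM--GM step under $a_k^2/A_k\le\sigma/L$, then telescope and bound $A_1G_1\le D_{\psi}(\vx^*,\vxh^{(0)})$ exactly as the paper does in the appendix. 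Your instantiation arithmetic $A_k=\frac{\sigma}{L}\cdot\frac{k(k+3)}{4}\ge\frac{\sigma}{L}\cdot\frac{(k+1)^2}{4}$ is the correct sum (the paper's $\frac{(k+1)(k+2)}{4}$ is a minor slip) and yields the same final bound, so no changes are needed.
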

\begin{proof}
The proof follows directly by applying Lemma \ref{lemma:acc-mirror-prox} and using $L$-smoothness of $f$ and $\sigma$-strong convexity of $\psi$. In particular, by Cauchy-Schwartz inequality and smoothness: 
\begin{align}
&\innp{\nabla f(\vx^{(k+1)}) - \nabla f(\vxh^{(k)}), \nabla \psi^*(\vzh^{(k)}) - \nabla \psi^*(\vz^{(k+1)})}\notag\\
&\hspace{1in}\leq L \|\vx^{(k+1)} - \vxh^{(k)}\|\cdot\|\nabla \psi^*(\vz^{(k+1)}) - \nabla \psi^*(\vzh^{(k)})\|,\notag
\end{align}
and, by Proposition \ref{prop:cvx-conj-bd-is-strongly-cvx-too}
\begin{equation}
\begin{aligned}
& D_{\psi^*}(\vzh^{(k)}, \vz^{(k+1)}) + D_{\psi^*}(\vz^{(k)}, \vzh^{(k)})\\
& \hspace{.5in}\geq \frac{\sigma}{2}\left(\|\nabla \psi^*(\vzh^{(k)}) - \nabla \psi^*(\vz^{(k+1)})\|^2 + \| \nabla \psi^*(\vz^{(k)}) - \nabla \psi^*(\vzh^{(k)}) \|^2\right).
\end{aligned}
\end{equation}
From the definition of the steps, $\vx^{(k+1)} - \vx^{(k)} = \frac{a_{k+1}}{A_{k+1}}(\nabla \psi^*(\vzh^{(k)}) - \nabla \psi^*(\vz^{(k)}))$, and, therefore:
\begin{align*}
E_{k+1} \leq \frac{{a_{k+1}}^2}{A_{k+1}}L\cdot pq - \frac{\sigma}{2}(p^2 + q^2),
\end{align*}
where $p = \|\nabla \psi^*(\vzh^{(k)}) - \nabla \psi^*(\vz^{(k+1)})\|$ and $q = \| \nabla \psi^*(\vz^{(k)}) - \nabla \psi^*(\vzh^{(k)}) \|$. Since, for any $p, q$, $p^2 + q^2 - 2\alpha pq \geq 0$ whenever $\alpha \leq 1$, it follows that $E_{k+1}\leq 0$ whenever $\frac{{a_{k+1}}^2}{A_{k+1}}\frac{L}{\sigma} \leq 1$, which is true by the theorem assumptions. In particular, for $a_k = \frac{k+1}{2}\cdot \frac{\sigma}{L}$, $A_k = \frac{\sigma}{L}(\frac{(k+1)(k+2)}{4}) \geq \frac{\sigma}{L}\frac{(k+1)^2}{4}$. 
This proves that $f(\vx^{(k)}) - f(\vx^*) \leq \frac{G_1}{A_{k}}.$ It remains to bound $G_1.$ This a simple computation, shown in the appendix, which yields: $G_1 \leq 
\frac{1}{A_1} D_{\psi}(\vx^*, \vxh^{(0)}).$
\end{proof}

\subsection{Generalized Smoothness: H\"{o}lder-Continuous Gradients}\label{sec:holder}

Suppose that $f(\cdot)$ has H\"{o}lder-continuous gradients, namely, $f(\cdot)$ then satisfies:
\begin{equation}\label{eq:holder-continuous-grad}
\|\nabla f(\vxh) - \nabla f(\vx)\| \leq L_{\nu}\|\vxh - \vx\|^{\nu},
\end{equation}
which also implies:
\begin{equation}\label{eq:holder-continuous-grad-1}
\forall \vx, \vxh \in X:\; f(\vxh) \leq f(\vx) + \innp{\nabla f(\vx), \vxh - \vx} + \frac{L_{\nu}}{1+\nu} \|\vxh - \vx\|^{1+\nu},
\end{equation}
where $\nu \in (0, 1]$, $L_{\nu} \in \mathbb{R}_{++}$. In particular, if $\nu = 1$, then $f(\cdot)$ is $L_{\nu}$-smooth. Thus, the functions with H\"{o}lder-continuous gradients represent a class of functions with generalized/relaxed smoothness properties.

The lower iteration complexity bound for (unconstrained) minimization of convex functions with  H\"{o}lder-continuous gradients was established in~\cite{nemirovskii1983problem} and equals $O\left({L_{\nu}D_1^{1+\nu}}{\epsilon^{-\frac{2}{1+3\nu}}}\right)$, where $D_1$ is the distance from the initial point to the optimal solution. A matching upper bound was obtained in \cite{nemirovski1985optimal}.

To recover the optimal convergence rate in the minimization of convex functions with H\"{o}lder-continuous gradients, as before, we bound the discretization error from Lemma \ref{lemma:acc-mirror-prox}. Before doing so, we will need the following technical proposition (which appears in a similar form as Lemma 3.1 a) in \cite{Mirror-Prox-Nemirovski}).

\begin{proposition}\label{prop:ineq-needed-for-mp-holder}
$$a_{i+1}\innp{\nabla f(\vx^{(i+1)}) - \nabla f(\vxh^{(i)}), \nabla \psi^*(\vz^{(i+1)}) - \nabla \psi^*(\vzh^{(i)})} \leq \sigma^{-1}{a_{i+1}}^2 \|\nabla f(\vx^{(i+1)}) - \nabla f(\vxh^{(i)})\|^2.$$
\end{proposition}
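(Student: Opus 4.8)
The plan is to reduce the claim to two elementary facts: the generalized Cauchy--Schwarz inequality for the dual-norm pair $\|\cdot\|,\|\cdot\|_*$, and the $\tfrac1\sigma$-Lipschitz continuity of $\nabla\psi^*$, which is the dual counterpart of $\sigma$-strong convexity of $\psi$ (the same property underlying Proposition~\ref{prop:cvx-conj-bd-is-strongly-cvx-too}, since $D_{\psi^*}(\vz,\vzh)\geq\frac\sigma2\|\nabla\psi^*(\vz)-\nabla\psi^*(\vzh)\|^2$ is equivalent to $\|\nabla\psi^*(\vz)-\nabla\psi^*(\vzh)\|\leq\tfrac1\sigma\|\vz-\vzh\|_*$). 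This is, essentially, Lemma~3.1~a) of~\cite{Mirror-Prox-Nemirovski} transplanted to the \axgd~updates.

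First I would use the \axgd~update rules in~(\ref{eq:dt-acc-mp-step}) to express the arguments of $\nabla\psi^*$ in terms of gradients of $f$. From the second and fourth lines there, $\vzh^{(i)}=\vz^{(i)}-a_{i+1}\nabla f(\vxh^{(i)})$ and $\vz^{(i+1)}=\vz^{(i)}-a_{i+1}\nabla f(\vx^{(i+1)})$, so, writing $\vg=\nabla f(\vx^{(i+1)})-\nabla f(\vxh^{(i)})$,
\[
\vz^{(i+1)}-\vzh^{(i)}=-a_{i+1}\,\vg,\qquad\text{hence}\qquad \|\vz^{(i+1)}-\vzh^{(i)}\|_*=a_{i+1}\|\vg\|_*.
\]
Then I would chain Cauchy--Schwarz with the Lipschitz bound on $\nabla\psi^*$:
\[
a_{i+1}\innp{\vg,\ \nabla\psi^*(\vz^{(i+1)})-\nabla\psi^*(\vzh^{(i)})}\ \le\ a_{i+1}\,\|\vg\|_*\,\bigl\|\nabla\psi^*(\vz^{(i+1)})-\nabla\psi^*(\vzh^{(i)})\bigr\|\ \le\ \frac{a_{i+1}}{\sigma}\,\|\vg\|_*\,\|\vz^{(i+1)}-\vzh^{(i)}\|_*\ =\ \frac{a_{i+1}^2}{\sigma}\,\|\vg\|_*^2,
\]
which is exactly the claimed right-hand side. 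Note that the estimate is insensitive to the order of the difference inside $\nabla\psi^*$, so the same bound applies to $\innp{\vg,\nabla\psi^*(\vzh^{(i)})-\nabla\psi^*(\vz^{(i+1)})}$, which is the form actually fed into Lemma~\ref{lemma:acc-mirror-prox}.

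There is no genuine obstacle here; the proof is a three-line computation. The only points requiring mild care are the bookkeeping of primal versus dual norms (the gradients $\nabla f$ are measured in $\|\cdot\|_*$ and the prox-images $\nabla\psi^*(\cdot)$ in $\|\cdot\|$) and invoking the Lipschitz-gradient property of $\psi^*$ in the correct direction. Everything else follows directly from the definitions of the \axgd~iterates.
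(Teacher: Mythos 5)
Your proof is correct and follows essentially the same route as the paper: use the step definitions to get $\vzh^{(i)}-\vz^{(i+1)} = a_{i+1}\bigl(\nabla f(\vx^{(i+1)})-\nabla f(\vxh^{(i)})\bigr)$, then chain the generalized Cauchy--Schwarz inequality with the $\tfrac{1}{\sigma}$-Lipschitz continuity of $\nabla\psi^*$. The only difference is packaging: the paper does not cite that Lipschitz property as a standard duality fact but re-derives it inline for the constrained conjugate by summing the first-order optimality conditions of Fact~\ref{fact:danskin} at $\vzh^{(i)}$ and $\vz^{(i+1)}$ and invoking $\sigma$-strong convexity of $\psi$; your suggested alternative of extracting it from Proposition~\ref{prop:cvx-conj-bd-is-strongly-cvx-too} also works, though it is not a literal equivalence---one needs the symmetric sum $D_{\psi^*}(\vz,\vzh)+D_{\psi^*}(\vzh,\vz)=\innp{\vz-\vzh,\ \nabla\psi^*(\vz)-\nabla\psi^*(\vzh)}$ followed by Cauchy--Schwarz.
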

The proof is provided in the appendix.

\begin{theorem}\label{eq:acc-mp-for-holder-smooth-fns}
Let $f(\cdot)$ be a convex function that satisfies (\ref{eq:holder-continuous-grad}), and let $\psi(\cdot)$ be $\sigma$-strongly convex. Let $\vx^{(k)}, \vz^{(k)}, \vxh^{(k)}, \vzh^{(k)}$ be updated according to the \axgd~algorithm in Equation~(\ref{eq:dt-acc-mp-step}), starting from an arbitrary initial point $\vxh^{(0)}\in X$ with the following initial conditions: $\vz^{(0)} = \nabla\psi(\vxh^{(0)})$ and $A_0 = 0$. Let $a_{k} = c\frac{\sigma}{L_{\nu}}D^{1-\nu}k^{\frac{-1+3\nu}{2}}$, where $D = \max_{\vx, \vxh \in X}\|\vx - \vxh\|$ and $c = 2^{\frac{3\nu(\nu + 1)-1}{2}}$. Then, $\forall k \geq 1:$
$$
f(\vx^{(k)}) - f(\vx^*) \leq 2^{\frac{1-3\nu(\nu+1)}{2}}\frac{L_{\nu}}{\sigma} \frac{ D^{\nu-1} D_{\psi}(\vx^{*}, \vxh^{(0)})}{k^{\frac{1+3\nu}{2}}}.
$$
In particular, if $\psi(\vx) = \frac{\sigma}{2}\|\vx\|^2$, then:
\end{theorem}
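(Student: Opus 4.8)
The plan is to invoke Lemma~\ref{lemma:acc-mirror-prox} to bound $E_{i+1}$, then use the Hölder-continuity of $\nabla f$ together with Proposition~\ref{prop:ineq-needed-for-mp-holder} to show that $E_{i+1}\le 0$ under the stated choice of $a_k$, and finally unwind $G_k=\frac{A_1}{A_k}G_1+\frac{1}{A_k}\sum_i E_i$ via Lemma~\ref{lemma:discr-error} together with the bound on $G_1$ (same computation as in Theorem~\ref{thm:smooth-acc-mp}, giving $G_1\le D_\psi(\vx^*,\vxh^{(0)})/A_1$). The core inequality to establish is
\[
a_{i+1}\innp{\nabla f(\vx^{(i+1)})-\nabla f(\vxh^{(i)}),\,\nabla\psi^*(\vzh^{(i)})-\nabla\psi^*(\vz^{(i+1)})}\le D_{\psi^*}(\vzh^{(i)},\vz^{(i+1)})+D_{\psi^*}(\vz^{(i)},\vzh^{(i)}).
\]

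First I would bound the left-hand side. By Proposition~\ref{prop:ineq-needed-for-mp-holder}, it is at most $\sigma^{-1}a_{i+1}^2\|\nabla f(\vx^{(i+1)})-\nabla f(\vxh^{(i)})\|^2$, and by the Hölder property~(\ref{eq:holder-continuous-grad}) this is at most $\sigma^{-1}a_{i+1}^2 L_\nu^2\|\vx^{(i+1)}-\vxh^{(i)}\|^{2\nu}$. Now observe from the \axgd~updates that $\vx^{(i+1)}-\vxh^{(i)}=\frac{a_{i+1}}{A_{i+1}}\big(\nabla\psi^*(\vzh^{(i)})-\nabla\psi^*(\vz^{(i)})\big)$, so $\|\vx^{(i+1)}-\vxh^{(i)}\|\le \frac{a_{i+1}}{A_{i+1}} q$ where $q=\|\nabla\psi^*(\vz^{(i)})-\nabla\psi^*(\vzh^{(i)})\|$; alternatively one can bound $\|\vx^{(i+1)}-\vxh^{(i)}\|$ by the diameter $D$ to trade off the exponent. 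The strategy is to use the $D$-bound on part of the power and the $q$-bound on the rest: write $\|\vx^{(i+1)}-\vxh^{(i)}\|^{2\nu}$ in terms of $q$ and $D$ so that the resulting expression becomes comparable to $q^2$ (which is controlled by $D_{\psi^*}(\vz^{(i)},\vzh^{(i)})$ via $\sigma$-strong convexity of $\psi$, Proposition~\ref{prop:cvx-conj-bd-is-strongly-cvx-too}, giving $D_{\psi^*}(\vz^{(i)},\vzh^{(i)})\ge\frac{\sigma}{2}q^2$).

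Next, I would combine: the LHS is bounded by $\sigma^{-1}L_\nu^2 a_{i+1}^2\,(a_{i+1}/A_{i+1})^{2\nu}\,q^{2\nu}\,D^{\text{(leftover)}}$ — more precisely one splits the factor $q^{2\nu}$ (when $\nu<1$) as $q^2\cdot q^{2\nu-2}\le q^2 D^{2\nu-2}$ is wrong-signed, so instead keep $q^{2\nu}$ and use Young's/AM-GM against $q^2$ after absorbing the deficit into powers of $D$ and $A_{i+1}$. This is exactly where the constant $c=2^{(3\nu(\nu+1)-1)/2}$ and the exponent $k^{(-1+3\nu)/2}$ in $a_k$ are engineered to appear: substituting $a_{i+1}=c\frac{\sigma}{L_\nu}D^{1-\nu}(i+1)^{(-1+3\nu)/2}$ and the induced $A_{i+1}=\sum_{j\le i+1}a_j=\Theta\big(\frac{\sigma}{L_\nu}D^{1-\nu}(i+1)^{(1+3\nu)/2}\big)$ should make the coefficient of $q^2$ exactly $\le \sigma/2$, so that $E_{i+1}\le \frac{\sigma}{2}q^2 - D_{\psi^*}(\vz^{(i)},\vzh^{(i)})-D_{\psi^*}(\vzh^{(i)},\vz^{(i+1)})\le -D_{\psi^*}(\vzh^{(i)},\vz^{(i+1)})\le 0$. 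Then $f(\vx^{(k)})-f(\vx^*)\le G_k\le G_1 A_1/A_k\le D_\psi(\vx^*,\vxh^{(0)})/A_k$, and plugging in the explicit $A_k=\Theta(\frac{\sigma}{L_\nu}D^{1-\nu}k^{(1+3\nu)/2})$ with the correct constant yields the displayed rate $2^{(1-3\nu(\nu+1))/2}\frac{L_\nu}{\sigma}\frac{D^{\nu-1}D_\psi(\vx^*,\vxh^{(0)})}{k^{(1+3\nu)/2}}$.

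The main obstacle I anticipate is the careful bookkeeping of constants and exponents in the AM-GM/Young step that converts $a_{i+1}^2(a_{i+1}/A_{i+1})^{2\nu}q^{2\nu}D^{?}$ into something $\le\frac{\sigma}{2}q^2$: one must track how the power of $D$ needed to homogenize $q^{2\nu}$ to $q^2$ (there is a leftover $\|\vx^{(i+1)}-\vxh^{(i)}\|^{2\nu}$ versus $q^2$ mismatch of degree $2-2\nu$, absorbed using $q\le D$) interacts with the prefactor $D^{1-\nu}$ in $a_k$ and with the growth rate of $A_k$. Verifying that the constant collapses to exactly $c$, and that the lower bound $A_{i+1}\ge(\text{const})\frac{\sigma}{L_\nu}D^{1-\nu}(i+1)^{(1+3\nu)/2}$ holds with the right constant (an integral/summation estimate on $\sum j^{(-1+3\nu)/2}$), is the delicate part; the rest is a direct specialization of the smooth case. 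Finally, for the last display with $\psi(\vx)=\frac{\sigma}{2}\|\vx\|^2$, one simply substitutes $D_\psi(\vx^*,\vxh^{(0)})=\frac{\sigma}{2}\|\vx^*-\vxh^{(0)}\|^2$ to get $f(\vx^{(k)})-f(\vx^*)\le 2^{(1-3\nu(\nu+1))/2}\frac{L_\nu}{2}\frac{D^{\nu-1}\|\vx^*-\vxh^{(0)}\|^2}{k^{(1+3\nu)/2}}$.
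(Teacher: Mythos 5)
Your outline follows the paper's route step for step: bound $E_{i+1}$ by Lemma~\ref{lemma:acc-mirror-prox}, control the inner product via Proposition~\ref{prop:ineq-needed-for-mp-holder} and the H\"older condition~(\ref{eq:holder-continuous-grad}), use the step identity $\vx^{(i+1)}-\vxh^{(i)}=\frac{a_{i+1}}{A_{i+1}}\big(\nabla\psi^*(\vzh^{(i)})-\nabla\psi^*(\vz^{(i)})\big)$ together with Proposition~\ref{prop:cvx-conj-bd-is-strongly-cvx-too}, lower-bound $A_k$ by summing the top half of the $a_i$'s, and finish with the same $G_1\le D_{\psi}(\vx^*,\vxh^{(0)})/A_1$ computation as in Theorem~\ref{thm:smooth-acc-mp}; the specialization to $\psi=\frac{\sigma}{2}\|\cdot\|^2$ is also as in the paper (which additionally uses $\|\vx^*-\vxh^{(0)}\|\le D$). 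So the approach is the same; the issue is that the one step you leave unexecuted is the crux.

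That step is passing from $\sigma^{-1}L_{\nu}^2\frac{a_{i+1}^{2+2\nu}}{A_{i+1}^{2\nu}}q^{2\nu}$ to something dominated by $\frac{\sigma}{2}q^2$, where $q=\|\nabla\psi^*(\vz^{(i)})-\nabla\psi^*(\vzh^{(i)})\|$. You correctly note that $q^{2\nu}\le D^{2\nu-2}q^2$ is wrong-signed (for $q\le D$ and $\nu<1$ it holds only at $q=D$), but you then assert that a Young/AM--GM absorption with the stated $c$ makes the coefficient of $q^2$ at most $\sigma/2$, so that $E_{i+1}\le 0$ per iteration. This cannot be made to work pointwise in $q$: the positive term scales as $q^{2\nu}$ and the negative one as $q^2$, so for $q$ small enough the bound on $E_{i+1}$ is positive regardless of how $a_{i+1}$, $A_{i+1}$ and $D$ are tuned, and any honest Young step of the form $Cq^{2\nu}\le\frac{\sigma}{2}q^2+C'$ leaves a positive, $q$-independent remainder that must then be summed against $A_k$ (with these step sizes a naive summation of those remainders only yields a rate of order $k^{-(1+\nu)/2}$, not $k^{-(1+3\nu)/2}$). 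Hence the claimed per-iteration non-positivity, and with it your final display, does not follow from the argument as written. To be fair, the paper's own proof is terse at exactly this point: it substitutes the step sizes and compares coefficients against $\frac{\sigma}{2}q^2$ using only $(k+1)^{\nu-1}\le 1$, which implicitly invokes the very reverse inequality you flagged. So you have identified the delicate spot, but your proposal does not close it; a complete proof needs either a per-iteration error budget that is summed and balanced against $A_k$ (as in universal/H\"older analyses of this type) or a genuine justification of the coefficient comparison, not just the assertion that the constant $c$ is ``engineered'' to make it work.
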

$$
f(\vx^{(k)}) - f(\vx^*) \leq 2^{\frac{1-3\nu(\nu+1)}{2}} L_{\nu} D^{1+\nu}k^{-\frac{1+3\nu}{2}}.
$$
\begin{proof}
We prove the theorem by bounding the discretization error $E_{i+1}$ from Lemma \ref{lemma:acc-mirror-prox}. 
Applying Propositions \ref{prop:cvx-conj-bd-is-strongly-cvx-too} and \ref{prop:ineq-needed-for-mp-holder}:
\begin{align} E_{i+1} = &
{a_{i+1}}\innp{\nabla f(\vx^{(i+1)}) - \nabla f(\vxh^{(i)}), \nabla \psi^*(\vzh^{(i)})- \nabla \psi^*(\vz^{(i+1)})}\notag\\
&- D_{\psi^*}(\vzh^{(i)}, \vz^{(i+1)})-D_{\psi^*}(\vz^{(i)}, \vzh^{(i)})\notag\\
\leq  &\sigma^{-1}{a_{i+1}}^2 \|\nabla f(\vx^{(i+1)}) - \nabla f(\vxh^{(i)})\|^2\notag\\
&- \frac{\sigma}{2}\left(\|\nabla \psi^*(\vzh^{(i)}) - \nabla \psi^*(\vz^{(i+1)})\|^2 + \|\nabla\psi^*(\vz^{(i)}) - \nabla\psi^*(\vzh^{(i)})\|^2\right)\notag\\
\leq &\sigma^{-1}{a_{i+1}}^2{L_{\nu}}^2 \|\vx^{(i+1)} - \vxh^{(i)}\|^{2\nu} - \frac{\sigma}{2}\|\nabla\psi^*(\vz^{(i)}) - \nabla\psi^*(\vzh^{(i)})\|^2\notag\\
\leq &\sigma^{-1}{L_{\nu}}^2 \frac{{a_{i+1}}^{2 + 2\nu}}{{A_{i+1}}^{2\nu}}\|\nabla\psi^*(\vz^{(i)}) - \nabla\psi^*(\vzh^{(i)})\|^{2\nu} - \frac{\sigma}{2} \|\nabla\psi^*(\vz^{(i)}) - \nabla\psi^*(\vzh^{(i)})\|^2, \label{eq:holder-acc-mp-cond}
\end{align}
where the second inequality is by (\ref{eq:holder-continuous-grad}) and the third inequality is by the step definition (\ref{eq:dt-acc-mp-step}).

Taking $a_{k} = c\frac{\sigma}{L_{\nu}}D^{1-\nu}k^{\frac{-1+3\nu}{2}}$, where $c = 2^{\frac{3\nu(\nu + 1)-1}{2}}$, it follows that $A_k = \sum_{i=1}^k a_i \geq \sum_{i=\lceil k/2\rceil}^k a_i \geq \frac{c}{2} D^{1-\nu}\frac{\sigma}{L_{\nu}} \big(\frac{k}{2}\big)^{\frac{1+3\nu}{2}}$. Therefore, the expression in (\ref{eq:holder-acc-mp-cond}) is at least:
\begin{align*}
\Big(-c^2 2^{3\nu(\nu+1)}(k+1)^{\nu - 1} + \frac{1}{2}\Big)\sigma\|\nabla\psi^*(\vz^{(k)}) - \nabla\psi^*(\vzh^{(k)})\|^2 \geq 0,
\end{align*}
as $(k+1)^{\nu - 1}\leq 1$. Therefore, we have that $G_k \leq \frac{A^{(1)}}{A^{(k)}}G_1$, and using similar arguments to bound the initial gap $G_1$, the proof follows.
\end{proof}

\subsection{Non-Smooth Minimization: Lipschitz-Continuous Objective}

We now show that we can recover the well-known $\frac{1}{\sqrt{k}}$ convergence rate for the class of non-smooth $L$-Lipschitz objectives by using \axgd. This is summarized in the following theorem. We note that, as in the analysis of classical mirror descent (see, e.g., \cite{ben2001lectures}), the factor $\log(k)$ can be removed if we fix the approximation error (and, consequently, the number of steps $k$) in advance. 
\begin{theorem}\label{thm:lipschitz-cont}
Let $f(\cdot)$ be a Lipschitz-continuous function with parameter $L$. Let $\vx^{(k)}$, $\vz^{(k)}$, $\vxh^{(k)}$, $\vzh^{(k)}$ be updated according to the \axgd~algorithm in Equation~(\ref{eq:dt-acc-mp-step}), starting from an arbitrary initial point $\vxh^{(0)}\in X$ with the following initial conditions: $\vz^{(0)} = \nabla\psi(\vxh^{(0)})$ and $A_0 = 0$. If $a_k = \frac{\sqrt{\sigma}}{2\sqrt{2}L}\sqrt{\frac{D_{\psi}(\vx^*, \vxh^{(0)})}{k}}$, then, $\forall k\geq 1$:
$$
f(\vx^{(k)}) - f(\vx^*)\leq 8(2+\log(k))\frac{L \cdot \sqrt{D_{\psi}(\vx^*, \vxh^{(0)})}}{\sqrt{\sigma}\sqrt{k}}.
$$
In particular, for $\psi(\vx) = \frac{\sigma}{2} \|\vx\|^2$:
$$
f(\vx^{(k)}) - f(\vx^*)\leq 4\sqrt{2}(2+\log(k))\frac{L \cdot \|\vx^* - \vxh^{(0)}\|}{\sqrt{k}}.
$$
\end{theorem}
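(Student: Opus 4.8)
The plan is to follow the template of Theorems~\ref{thm:smooth-acc-mp} and~\ref{eq:acc-mp-for-holder-smooth-fns}: bound the per-step discretization error $E_{i+1}$ of Lemma~\ref{lemma:acc-mirror-prox}, then sum it via Lemma~\ref{lemma:discr-error}. The one structural novelty is that, for a merely Lipschitz objective, there is no way to force $E_{i+1}\le 0$; instead I would bound it by $O(a_{i+1}^2 L^2/\sigma)$ and let the step sizes balance this against the $1/A_k$ prefactor. Concretely, starting from Lemma~\ref{lemma:acc-mirror-prox}, apply Cauchy--Schwarz to the inner-product term and Proposition~\ref{prop:cvx-conj-bd-is-strongly-cvx-too} (co-coercivity of $\nabla\psi^*$) to the two Bregman terms -- this is exactly the manipulation that appears in the H\"older case, where it is packaged as Proposition~\ref{prop:ineq-needed-for-mp-holder} -- to get, writing $r=\|\nabla\psi^*(\vzh^{(i)})-\nabla\psi^*(\vz^{(i+1)})\|$,
\begin{align*}
E_{i+1} \;\le\; a_{i+1}\|\nabla f(\vx^{(i+1)})-\nabla f(\vxh^{(i)})\|_*\, r \;-\; \tfrac{\sigma}{2}r^2 \;-\; \tfrac{\sigma}{2}\|\nabla\psi^*(\vz^{(i)})-\nabla\psi^*(\vzh^{(i)})\|^2.
\end{align*}
Since $f$ is $L$-Lipschitz, each (sub)gradient has dual norm at most $L$, so $\|\nabla f(\vx^{(i+1)})-\nabla f(\vxh^{(i)})\|_*\le 2L$ (this is where I use that $\vx^{(i+1)},\vxh^{(i)}\in X$, which holds by the feasibility argument of Section~\ref{sec:continuous}, as these are convex combinations of $\vxh^{(0)}$ and points $\nabla\psi^*(\cdot)\in X$); maximizing $2La_{i+1}r-\tfrac{\sigma}{2}r^2$ over $r$ and dropping the last nonnegative term gives $E_{i+1}\le 2L^2 a_{i+1}^2/\sigma$.

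Next I would sum through Lemma~\ref{lemma:discr-error} together with the initial-gap bound $A_1G_1=O(D_\psi(\vx^*,\vxh^{(0)}))$, obtained exactly as in Theorem~\ref{thm:smooth-acc-mp} (with the mild modification noted before Lemma~\ref{lemma:acc-mirror-prox}), to obtain $f(\vx^{(k)})-f(\vx^*)\le G_k \le \frac{1}{A_k}\big(O(D_\psi(\vx^*,\vxh^{(0)})) + \frac{2L^2}{\sigma}\sum_{i=1}^{k}a_i^2\big)$. With $a_i=\frac{\sqrt{\sigma}}{2\sqrt{2}L}\sqrt{D_\psi(\vx^*,\vxh^{(0)})/i}$ we have $a_i^2=\frac{\sigma D_\psi(\vx^*,\vxh^{(0)})}{8L^2 i}$, hence $\frac{2L^2}{\sigma}\sum_{i=1}^{k}a_i^2=\frac{D_\psi(\vx^*,\vxh^{(0)})}{4}\sum_{i=1}^{k}\frac1i \le \frac{D_\psi(\vx^*,\vxh^{(0)})}{4}(1+\log k)$, while $A_k=\frac{\sqrt{\sigma}\sqrt{D_\psi(\vx^*,\vxh^{(0)})}}{2\sqrt{2}L}\sum_{i=1}^{k}i^{-1/2}\ge\frac{\sqrt{\sigma}\sqrt{D_\psi(\vx^*,\vxh^{(0)})}}{2\sqrt{2}L}\sqrt{k}$ because every summand is at least $k^{-1/2}$. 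Dividing, the $D_\psi$-factors collapse to $\sqrt{D_\psi(\vx^*,\vxh^{(0)})}$ and one gets a bound of the advertised form $c(2+\log k)\frac{L\sqrt{D_\psi(\vx^*,\vxh^{(0)})}}{\sqrt{\sigma}\sqrt{k}}$ with an absolute constant $c$ well below $8$ (the theorem's constant is deliberately loose). The special case $\psi=\frac{\sigma}{2}\|\cdot\|^2$ then follows by substituting $D_\psi(\vx^*,\vxh^{(0)})=\frac{\sigma}{2}\|\vx^*-\vxh^{(0)}\|^2$, which turns $8\frac{L\sqrt{D_\psi(\vx^*,\vxh^{(0)})}}{\sqrt{\sigma}}$ into $\frac{8}{\sqrt{2}}L\|\vx^*-\vxh^{(0)}\|=4\sqrt{2}\,L\|\vx^*-\vxh^{(0)}\|$.

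The main obstacle is choosing the step-size schedule: unlike the smooth and H\"older regimes there is no cancellation of $E_{i+1}$, so one must recognize that the correct trade-off is $a_i=\Theta(1/\sqrt{i})$, which simultaneously makes $\sum_i a_i^2$ a harmonic sum ($\Theta(\log k)$) and keeps $A_k=\Theta(\sqrt{k})$; this is precisely where the extra $\log k$ enters, and -- as the remark after the theorem points out -- it disappears if one fixes the horizon $k$ in advance and uses a constant step $a_i=\Theta(1/\sqrt{k})$, for which $\sum_i a_i^2=\Theta(1)$. Everything else is routine bookkeeping: checking feasibility of the iterates so the Lipschitz bound may be invoked at $\vx^{(i+1)},\vxh^{(i)}$, bounding the initial gap $G_1$ as in the appendix, and tracking constants through the harmonic-sum estimates.
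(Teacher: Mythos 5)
Your proposal is correct and follows essentially the same route as the paper's proof: bound $E_{i+1}$ via Cauchy--Schwarz, the $2L$ bound on the gradient difference, and Proposition~\ref{prop:cvx-conj-bd-is-strongly-cvx-too}, obtaining $E_{i+1}=O(a_{i+1}^2L^2/\sigma)$, then sum through Lemma~\ref{lemma:discr-error} with $a_i\propto 1/\sqrt{i}$ so that $\sum_i a_i^2$ is a harmonic sum while $A_k=\Theta(\sqrt{k})$, and finish with the initial-gap bound. The only differences are cosmetic constant bookkeeping (you keep the sharper $2a_{i+1}^2L^2/\sigma$ where the paper settles for $8a_{i+1}^2L^2/\sigma$), noting only that the $G_1$ bound needs the Lipschitz (rather than smoothness) estimate on $\nabla f(\vx^{(1)})-\nabla f(\vxh^{(0)})$, a point both you and the paper treat briefly.
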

\begin{proof}
As before, we bound the discretization error from Lemma \ref{lemma:acc-mirror-prox}. 
As $f(\cdot)$ is $L$-Lipschitz, using Proposition \ref{prop:cvx-conj-bd-is-strongly-cvx-too}:
\begin{align*}
E_{i+1} \leq & 
{a_{i+1}}\innp{\nabla f(\vx^{(i+1)}) - \nabla f(\vxh^{(i)}),  \nabla \psi^*(\vzh^{(i)}) - \nabla \psi^*(\vz^{(i+1)})} \\
&
- D_{\psi^*}(\vzh^{(i)}, \vz^{(i+1)})-D_{\psi^*}(\vz^{(i)}, \vzh^{(i)})\\
\leq & 2a_{i+1}L\|\nabla \psi^*(\vz^{(i+1)}) - \nabla \psi^*(\vzh^{(i)})\| - \frac{\sigma}{2}\|\nabla \psi^*(\vz^{(i+1)}) - \nabla \psi^*(\vzh^{(i)})\|^2\\
\leq & \frac{8(a^{(i+1)}L)^2}{\sigma},
\end{align*}
where we have used the inequality $2xy - x^2 \leq y^2$, $\forall x, y$. As $\sigma \geq L$ and $$
A_{k}\cdot \frac{2\sqrt{2}L}{\sqrt{\sigma D_{\psi}(\vx^*, \vxh^{(0)})}} = \sum_{i=1}^k \frac{1}{\sqrt{k}}\geq \sum_{i=\lceil k/2\rceil}^k \frac{1}{\sqrt{k}}\geq \frac{1}{2}\cdot \sqrt{\frac{k}{2}},$$ 
we have that 
$$
\sum_{i=1}^k \frac{E_i}{A_k} \leq 8 \cdot \frac{L \cdot \sqrt{{D_{\psi}(\vx^*, \vxh^{(0)})}}}{\sqrt{\sigma}\sqrt{k}} (\log(k)+1),$$ 
which, after bounding the initial gap by similar arguments, completes the proof. 
\end{proof}

%
%
%
\section{Experiments}\label{sec:experiments}

We now illustrate the performance of \agd and \axgd for (i) an unconstrained problem over $\mathbb{R}^n$ with the objective function $f(\vx) = \frac{1}{2}\innp{\mA \vx, \vx} - \innp{\vb, \vx}$, and (ii) for the problem with the same objective and unit simplex as the feasible region, where $\mA$ is the Laplacian of a cycle graph\footnote{Namely, the sum of a tridiagonal matrix $\mathbf{B}$ with 2's on its main diagonal and -1's on its remaining two diagonals and a matrix $\mathbf{C}$ whose all elements are zero except for the $\mathbf{C}_{1, n} = \mathbf{C}_{n, 1} = -1$.} and $\vb$ is a vector whose first element is one and the remaining elements are zero. This example is known as a ``hard'' instance for smooth minimization -- it is typically used in proving the lower iteration complexity bound for first-order methods (see, e.g., \cite{nesterov2013introductory}). We also include Gradient Descent (\gd)  in the exact gap graphs for comparison. 
In the experiments, we take $n=100$ and $\sigma=L$ ($=4$). We use the $\ell_2$ norm in the gradient steps. 

\begin{figure}[t!]
\centering
\subfigure[unconstrained]{\label{fig:ex-gap}\includegraphics[width=.24\textwidth]{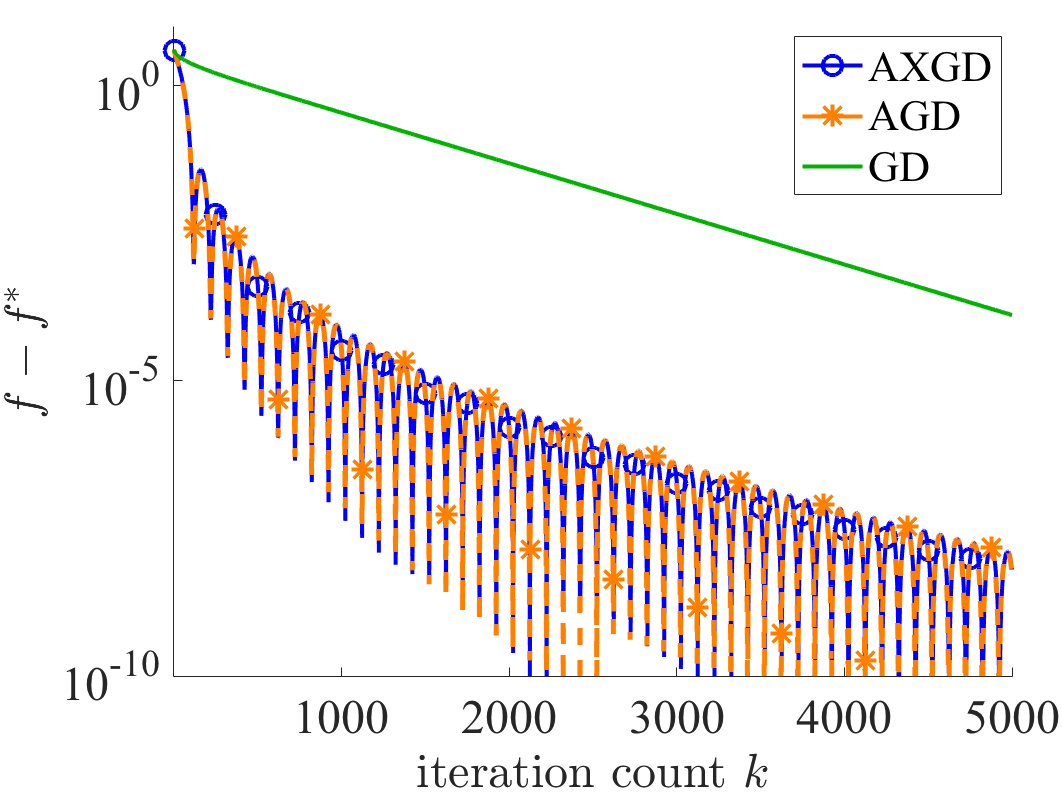}}
\subfigure[unconstrained]{\label{fig:ex-approx-gap}\includegraphics[width=.24\textwidth]{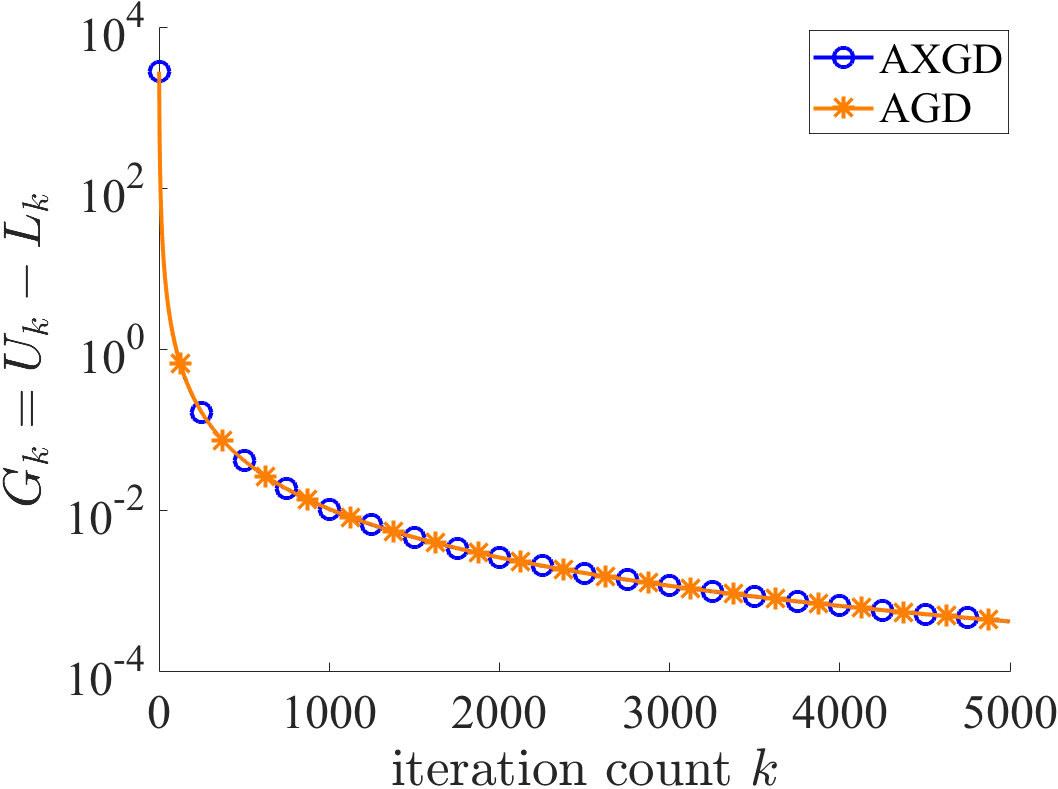}}
\subfigure[simplex]{\label{fig:ex-gap-simplex}\includegraphics[width=.24\textwidth]{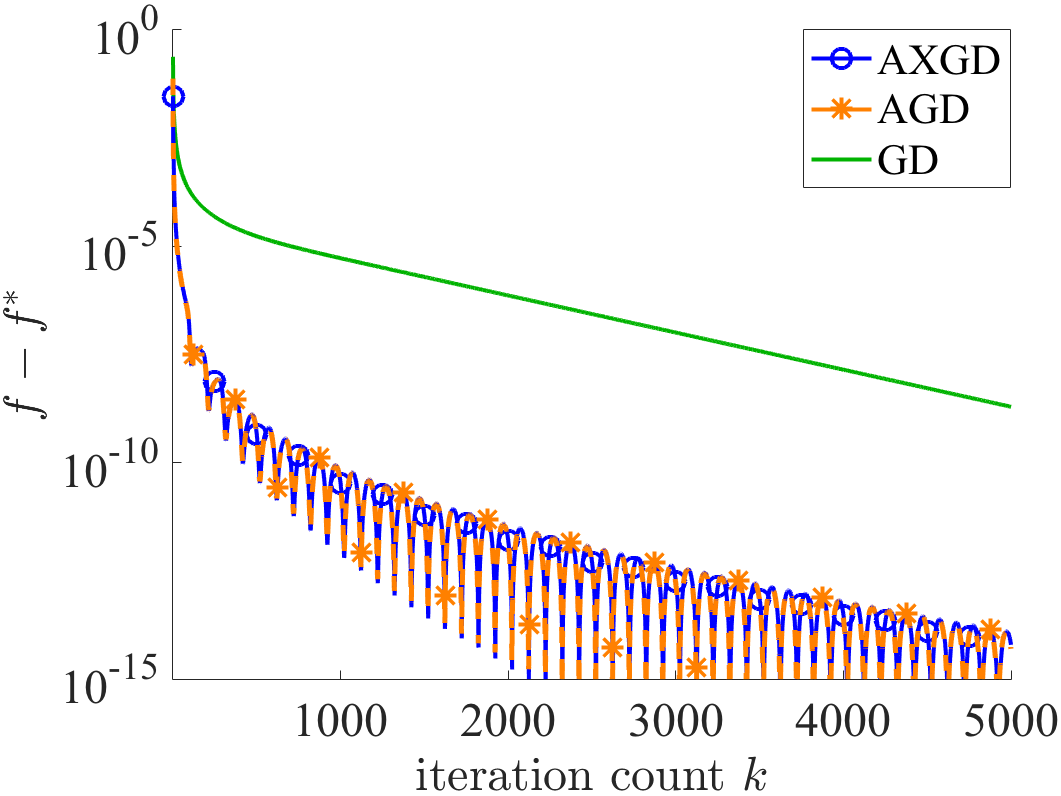}}
\subfigure[simplex]{\label{fig:ex-approx-gap-simplex}\includegraphics[width=.24\textwidth]{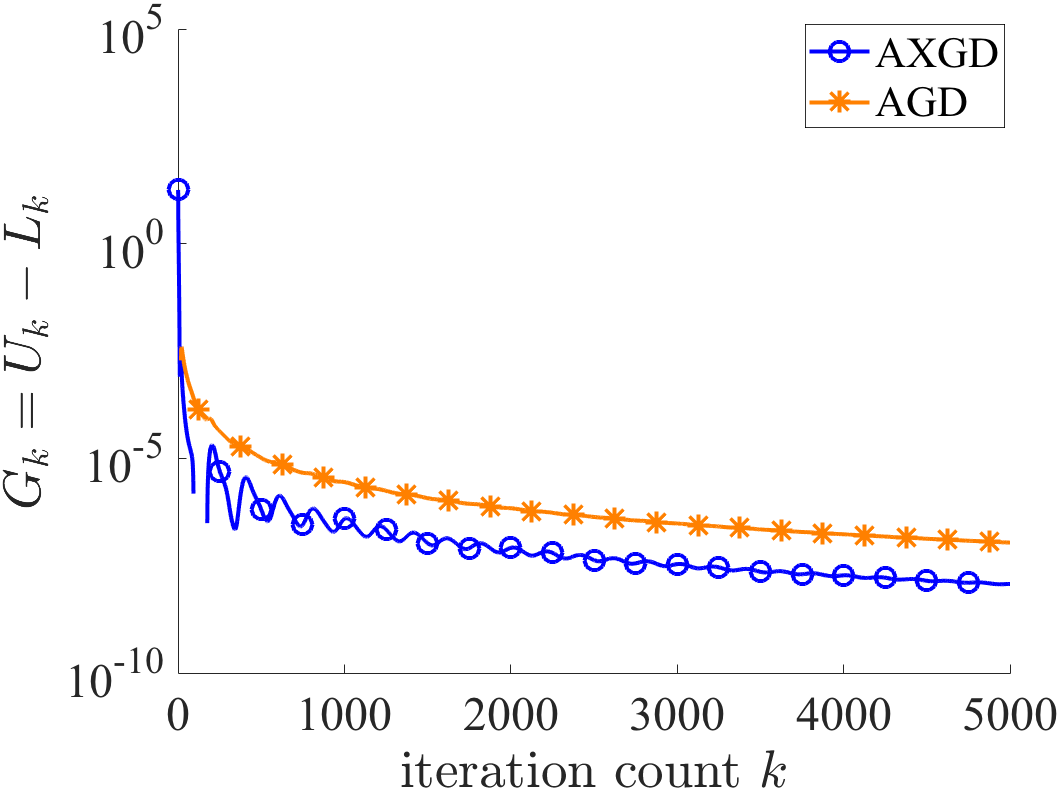}}
\caption{(\protect\subref{fig:ex-gap}),(\protect\subref{fig:ex-gap-simplex}) Exact and (\protect\subref{fig:ex-approx-gap}),(\protect\subref{fig:ex-approx-gap-simplex}) approximate duality gaps for \agd and \axgd with exact gradients.}
\label{fig:gaps-exact}
\end{figure}

In the figures, $f$ denotes the objective value at the upper-bound point and $f^*$ denotes the optimal objective value, so that $f- f^*$ is the true distance to the optimum (the exact gap). 
Fig.~\ref{fig:gaps-exact} shows the distance to the optimum and the approximate  duality gap $G_k = U_k - L_k$ obtained using our analysis. We can observe that \agd~and \axgd~exhibit similar performance in these examples. The approximate gap overestimates the actual duality gap, however, the difference between the two decreases with the number of iterations.

\begin{figure}[t]
\centering
\subfigure[$\epsilon_{\eta} = 10^{-1}$, unconstrained]{\label{fig:noisy-gap-e-1}\includegraphics[width=.3\textwidth]{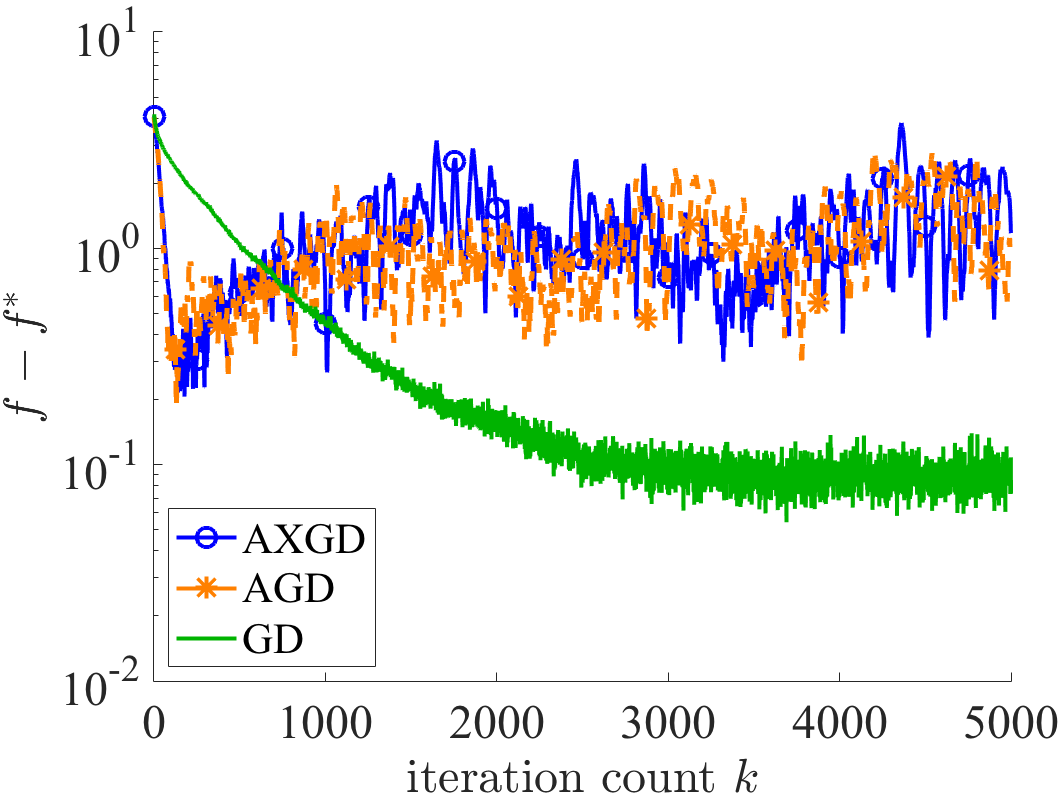}}
\subfigure[$\epsilon_{\eta} = 10^{-2}$, unconstrained]{\label{fig:noisy-gap-e-2}\includegraphics[width=.3\textwidth]{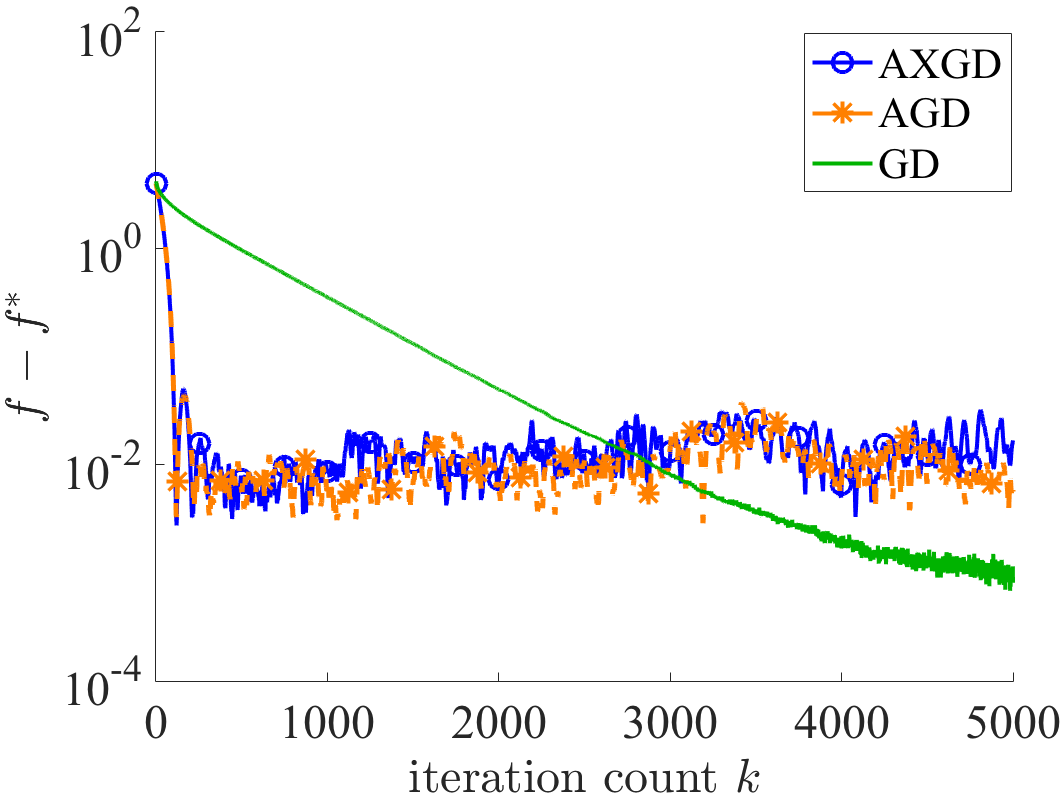}}
\subfigure[$\epsilon_{\eta} = 10^{-3}$, unconstrained]{\label{fig:noisy-gap-e-3}\includegraphics[width=.3\textwidth]{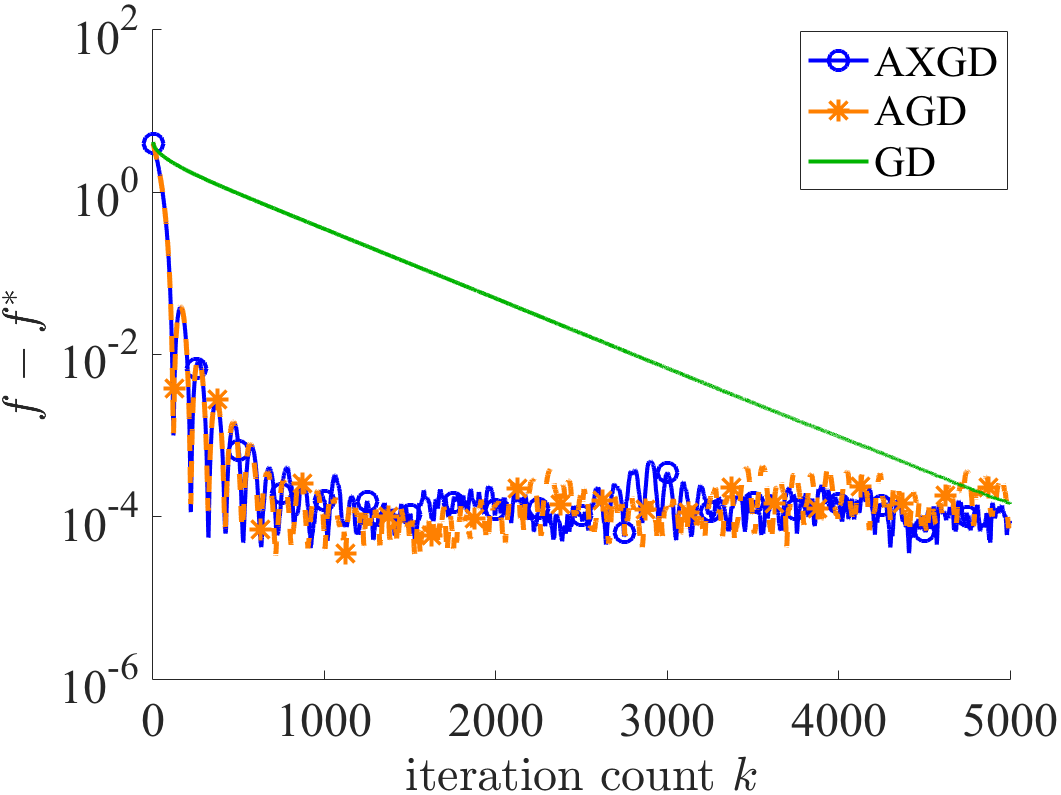}}\\
\subfigure[$\epsilon_{\eta} = 10^{-1}$, simplex]{\label{fig:noisy-gap-e-1-simplex}\includegraphics[width=.3\textwidth]{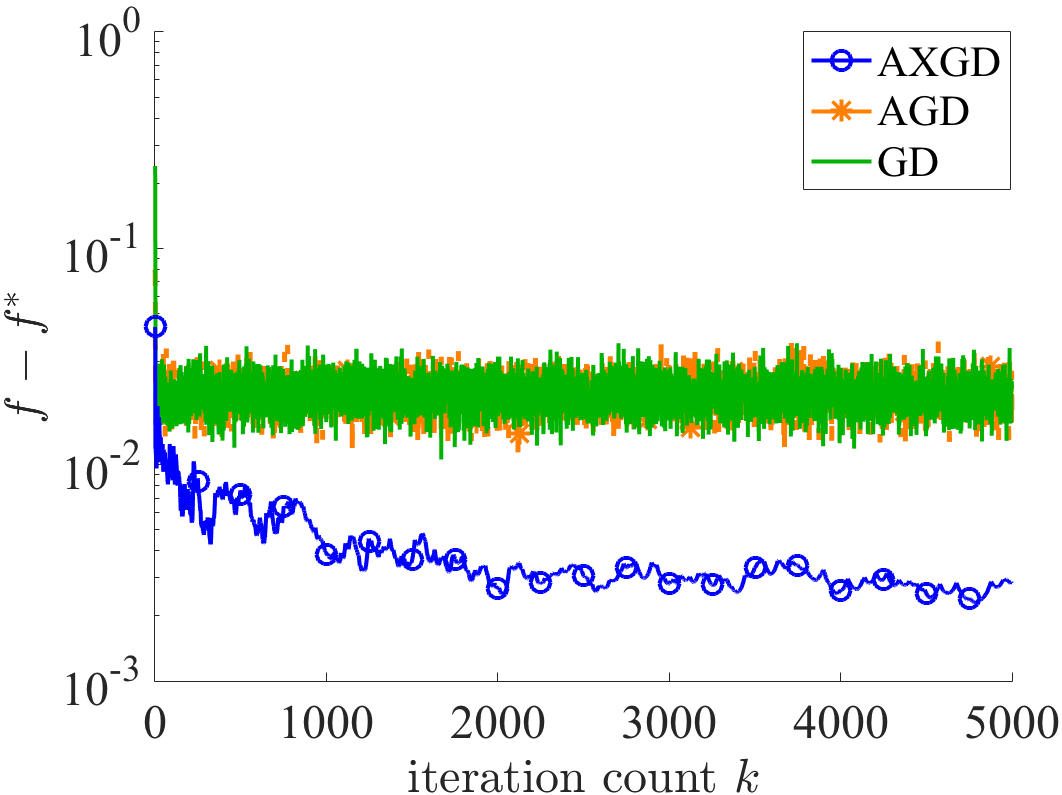}}
\subfigure[$\epsilon_{\eta} = 10^{-2}$, simplex]{\label{fig:noisy-gap-e-2-simplex}\includegraphics[width=.3\textwidth]{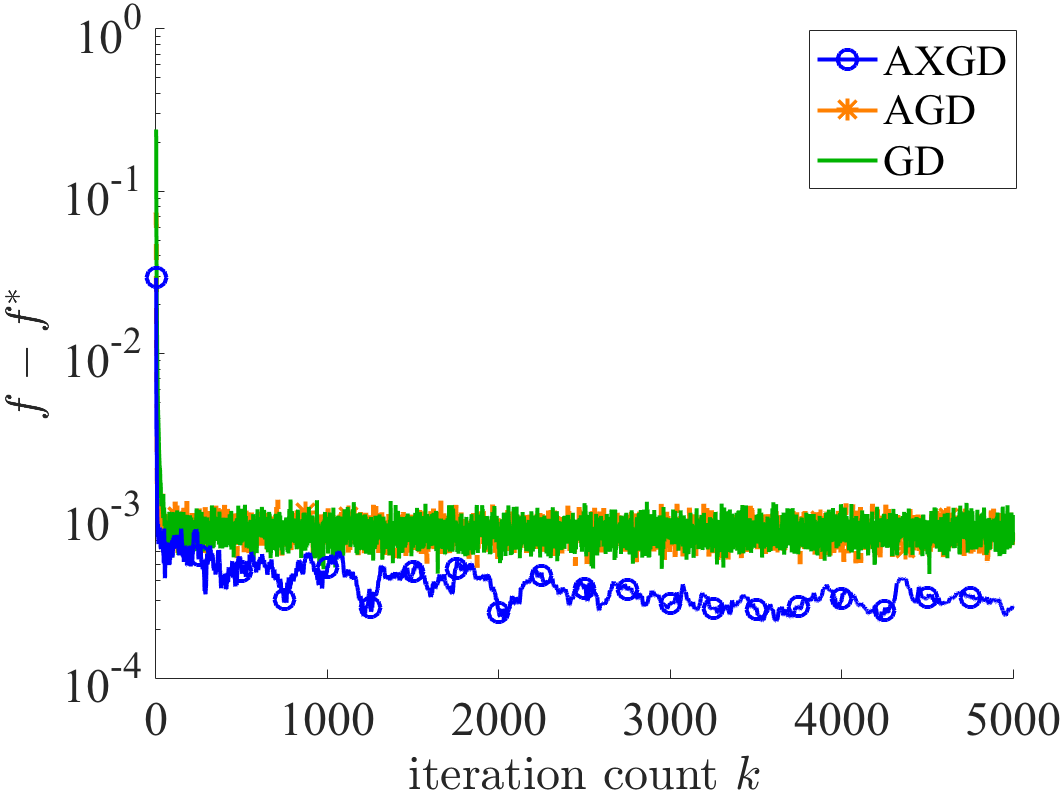}}
\subfigure[$\epsilon_{\eta} = 10^{-3}$, simplex]{\label{fig:noisy-gap-e-3-simplex}\includegraphics[width=.3\textwidth]{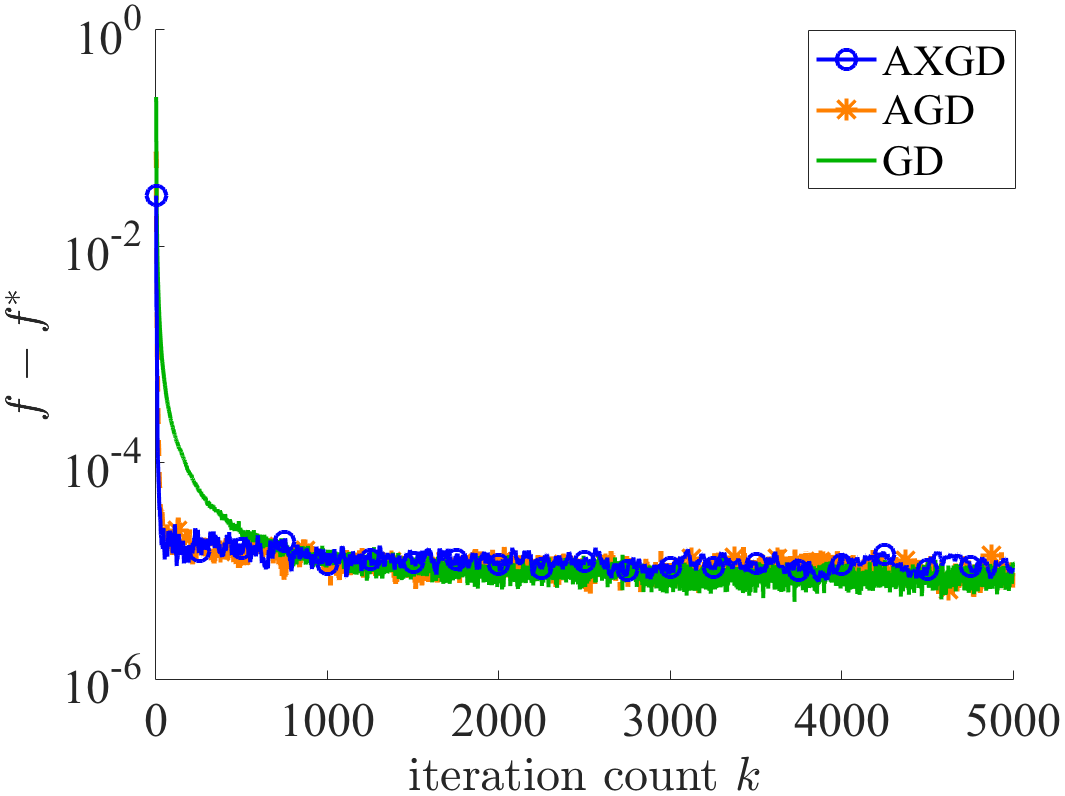}}
\caption{Exact gap for additive Gaussian noise in the gradients with zero mean and covariance $\epsilon_{\eta} I$ (\protect\subref{fig:noisy-gap-e-1})-(\protect\subref{fig:noisy-gap-e-3}) in the unconstrained-region case and (\protect\subref{fig:noisy-gap-e-1-simplex})-(\protect\subref{fig:noisy-gap-e-3-simplex}) in unit simplex.}
\label{fig:noisy-gaps}
\end{figure}

\paragraph*{Acceleration and Noise} 
We now consider the setting in which the gradients output by our oracle are corrupted by additive noise, which has significant applications in practice~\cite{moritz} and theory~\cite{smith}. We note that this model is fundamentally different from the inexact model considered by Devolder~\etal~\cite{devolder2014first}, for which tight lower bounds preventing acceleration exist.\footnote{In \cite{devolder2014first}, it is assumed that a function $f(\cdot)$ is associated with a $(\delta, L)$ oracle, such that 
$
f(\vxh)\leq f(\vx) + \innp{\nabla f(\vx), \vxh - \vx} + \frac{L}{2}\|\vxh - \vx\|^2 + \frac{\delta}{2},\; \forall \vx, \vxh \in X.
$
Such a model seems more suitable for incorrectly specified functions (e.g., non-smooth functions treated as being smooth) and adversarially perturbed functions.}

Specifically, we experimentally evaluate the performance of \agd~and \axgd~under additive Gaussian noise. 
Fig.~\ref{fig:noisy-gaps} illustrates the performance of \agd~and \axgd~when the gradients are corrupted by zero-mean additive Gaussian noise with covariance matrix $\epsilon_{\eta} I$, where $I$ is the identity matrix. When the region is unconstrained (top row in Fig.~\ref{fig:noisy-gaps}), both \agd and \axgd exhibit high sensitivity to noise.  
The \gd method overall exhibits higher tolerance to noise (at the expense of slower convergence). 
In the case of the unit simplex region (bottom row in Fig.~\ref{fig:noisy-gaps}), all the algorithms appear more tolerant to noise than in the unconstrained case. Interestingly, on this example \axgd exhibits higher tolerance to noise than \gd and \agd, both in terms of mean and in terms of variance. Explaining this phenomenon analytically is an interesting question that merits further investigation. 

\section{Conclusion}
We have presented a novel accelerated method -- \axgd-- that combines ideas from the Nesterov's \agd~and Nemirovski's mirror prox. \axgd~achieves optimal convergence rates for a range of convex optimization problems, such as the problems with the (i) smooth objectives, (ii) objectives with H\"{o}lder-continuous gradients, (iii) and non-smooth Lipschitz-continuous objectives. In the constrained-regime  experiments from Section~\ref{sec:experiments}, the method demonstrates favorable performance compared to \agd when subjected to zero-mean Gaussian noise. 

There are several directions that merit further investigation. A more thorough analytical and experimental study of acceleration when the gradients are corrupted by noise is of particular interest, since the gradients can often come from noise-corrupted measurements. Further, our experiments from Fig.~\ref{fig:noisy-gaps} suggest that there are cases that incur a trade-off between noise tolerance and acceleration. A systematic study of this trade-off is thus another important direction, since it would guide the choice of accelerated/non-accelerated algorithms in practice depending on the application. Finally, it is interesting to investigate whether restart schemes can improve  the algorithms' noise tolerance, since in the noiseless setting several restart schemes are known to improve the convergence of \agd in practice.

\bibliographystyle{plainurl}
\bibliography{references}

\appendix

\section{Properties of the Bregman  Divergence}\label{sec:breg-div-prop}

The following properties of Bregman divergence will be useful in our analysis.
\begin{proposition}\label{prop:Bregman-divergence-properties}
$D_{\psi}(\nabla \psi^{*}(\vz), \vx) = D_{\psi^*}(\nabla \psi(\vx), \vz)$, $\forall \vx, \vz$.
\end{proposition}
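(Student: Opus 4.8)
The plan is to prove the identity $D_{\psi}(\nabla \psi^{*}(\vz), \vx) = D_{\psi^*}(\nabla \psi(\vx), \vz)$ by expanding both sides using the definition of Bregman divergence (Definition~\ref{def:bregman-divergence}) together with the Fenchel conjugacy relations linking $\psi$ and $\psi^*$. The two facts I would lean on are: (i) by Fact~\ref{fact:danskin} and the differentiability/strong-convexity assumptions on $\psi$, the maps $\nabla\psi$ and $\nabla\psi^*$ are inverse to each other, so $\nabla\psi^*(\nabla\psi(\vx)) = \vx$ and $\nabla\psi(\nabla\psi^*(\vz)) = \vz$; and (ii) the Fenchel equality, which states that whenever $\vu = \nabla\psi^*(\vz)$ (equivalently $\vz = \nabla\psi(\vu)$) one has $\psi(\vu) + \psi^*(\vz) = \innp{\vz, \vu}$. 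Both follow from $\psi^{**}=\psi$ and Danskin's theorem as already set up in Section~\ref{sec:prelims}.

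Concretely, I would write $\vu \defeq \nabla\psi^*(\vz)$, so that $\vz = \nabla\psi(\vu)$. Then the left-hand side is $D_\psi(\vu, \vx) = \psi(\vu) - \psi(\vx) - \innp{\nabla\psi(\vx), \vu - \vx}$. For the right-hand side, set $\vw \defeq \nabla\psi(\vx)$, so $\vx = \nabla\psi^*(\vw)$, giving $D_{\psi^*}(\vw, \vz) = \psi^*(\vw) - \psi^*(\vz) - \innp{\nabla\psi^*(\vz), \vw - \vz} = \psi^*(\vw) - \psi^*(\vz) - \innp{\vu, \vw - \vz}$. Now I apply the Fenchel equality twice: $\psi^*(\vw) = \innp{\vw, \vx} - \psi(\vx)$ (since $\vx = \nabla\psi^*(\vw)$) and $\psi^*(\vz) = \innp{\vz,\vu} - \psi(\vu)$ (since $\vu = \nabla\psi^*(\vz)$). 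Substituting these into the right-hand side and also rewriting $\vw = \nabla\psi(\vx)$, $\vz = \nabla\psi(\vu)$ where convenient, everything should collapse: the inner-product terms $\innp{\vw,\vx}$, $\innp{\vz,\vu}$, $\innp{\vu,\vw}$, $\innp{\vu,\vz}$ cancel in pairs, and one is left with $\psi(\vu) - \psi(\vx) - \innp{\vw, \vu} + \innp{\vw, \vx} = \psi(\vu) - \psi(\vx) - \innp{\nabla\psi(\vx), \vu - \vx}$, which is exactly the left-hand side.

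There is essentially no analytic obstacle here — it is a symmetric bookkeeping computation — so the only thing to be careful about is domain/well-definedness: one must ensure $\nabla\psi^*(\vz)$ and $\nabla\psi(\vx)$ are genuinely in the right sets so that the conjugacy inversion holds, which is guaranteed by the standing assumption that $\psi$ is differentiable and strongly convex (hence $\psi^{**}=\psi$ by Fenchel--Moreau, and Fact~\ref{fact:danskin} applies). I would also note that the statement is implicitly quantified only over $\vz$ in the range of $\nabla\psi$ (equivalently, $\vz$ for which $\nabla\psi^*(\vz)$ lies in $X$), which is the natural setting in which both sides are defined; under the blanket assumptions of the paper this is exactly the set of $\vz$ encountered in the algorithm. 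The main ``step'' is thus just invoking the Fenchel equality in the right form; after that the algebra is routine and I would not belabor it.
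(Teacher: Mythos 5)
Your proposal is correct and follows essentially the same route as the paper: expand both Bregman divergences by Definition~\ref{def:bregman-divergence} and invoke the Fenchel equalities $\psi^*(\vz)=\innp{\nabla\psi^*(\vz),\vz}-\psi(\nabla\psi^*(\vz))$ and $\psi(\vx)=\innp{\nabla\psi(\vx),\vx}-\psi^*(\nabla\psi(\vx))$ (justified via Fact~\ref{fact:danskin} and $\psi^{**}=\psi$), so that both sides collapse to the symmetric expression $\psi(\nabla\psi^*(\vz))+\psi^*(\nabla\psi(\vx))-\innp{\nabla\psi(\vx),\nabla\psi^*(\vz)}$. Your auxiliary variables $\vu,\vw$ and the gradient-inversion remark are just a change of bookkeeping, not a different argument.
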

\begin{proof}
From the definition of $\psi^*$ and Fact \ref{fact:danskin}, 
\begin{equation}\label{eq:psi*-in-terms-of-psi}
\psi^*(\vz) = \innp{\nabla \psi^*(\vz), \vz} - \psi(\nabla \psi^*),\; \forall \vz.
\end{equation}
Similarly, as in the light of Fenchel-Moreau Theorem $\psi^{**} = \psi$,
\begin{equation}\label{eq:psi-in-terms-of-psi*}
\psi(\vx) = \innp{\nabla \psi(\vx), \vx} - \psi^*(\nabla \psi(\vx)),\; \forall \vx.
\end{equation}
Using the definition of $D_{\psi}(\nabla \psi^*(\vz), \vx)$ and Fact \ref{fact:danskin}:
\begin{align}
D_{\psi}(\nabla \psi^*(\vz), \vx) &= \psi(\nabla \psi^*(\vz)) - \psi(\vx) - \innp{\nabla \psi (\vx), \nabla \psi^*(\vz) - \vx}\notag\\
&= \psi(\nabla \psi^*(\vz)) + \psi^*(\nabla \psi(\vx)) - \innp{\nabla \psi(\vx), \nabla \psi^*(\vz)}. \label{eq:D-psi-nabla-psi*-z-x}
\end{align}
Similarly, using the definition of $D_{\psi^*}(\nabla \psi(\vx), \vz)$ combined with (\ref{eq:psi*-in-terms-of-psi}):
\begin{align}
D_{\psi^*}(\nabla \psi(\vx), \vz)&= \psi^*(\nabla \psi(\vx)) - \psi^*(\vz) - \innp{\nabla \psi^*(\vz), \nabla \psi(\vx) - \vz}\notag\\
&= \psi^*(\nabla \psi(\vx)) + \psi(\nabla \psi^*(\vz)) - \innp{\nabla \psi^*(\vz), \nabla \psi(\vx)}.\label{eq:D-psi*-nabla-psi-x-z}
\end{align}
Comparing (\ref{eq:D-psi-nabla-psi*-z-x}) and (\ref{eq:D-psi*-nabla-psi-x-z}), the proof follows.
\end{proof}

\begin{proposition}\label{prop:cvx-conj-bd-is-strongly-cvx-too}
If $\psi(\cdot)$ is $\sigma$-strongly convex, then $D_{\psi^*}(\vz, \vzh) \geq \frac{\sigma}{2}\|\nabla \psi^*(\vz) - \nabla \psi^*(\vzh)\|^2$.
\end{proposition}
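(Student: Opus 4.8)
The plan is to reduce the stated inequality to the elementary fact that a $\sigma$-strongly convex function has Bregman divergence at least $\frac{\sigma}{2}\|\cdot\|^2$, and then to transport that bound from $\psi$ to $\psi^*$ using the conjugacy identity of Proposition~\ref{prop:Bregman-divergence-properties}.

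First I would observe that Definition~\ref{def:strong-convexity}, rewritten in terms of the Bregman divergence, states precisely that $D_{\psi}(\vx, \vxh) = \psi(\vx) - \psi(\vxh) - \innp{\nabla\psi(\vxh), \vx - \vxh} \geq \frac{\sigma}{2}\|\vx - \vxh\|^2$ for all $\vx, \vxh \in X$. This is the only place where strong convexity of $\psi$ enters.

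Next I would invoke Proposition~\ref{prop:Bregman-divergence-properties}, which gives $D_{\psi}(\nabla\psi^*(\vu), \vv) = D_{\psi^*}(\nabla\psi(\vv), \vu)$ for all $\vu, \vv$. Since $\nabla\psi$ and $\nabla\psi^*$ are mutual inverses (Fact~\ref{fact:danskin} together with $\psi^{**} = \psi$), I would choose $\vv = \nabla\psi^*(\vz)$, so that $\nabla\psi(\vv) = \vz$, and $\vu = \vzh$; the identity then reads $D_{\psi^*}(\vz, \vzh) = D_{\psi}(\nabla\psi^*(\vzh), \nabla\psi^*(\vz))$.

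Finally I would chain the two displays: $D_{\psi^*}(\vz, \vzh) = D_{\psi}(\nabla\psi^*(\vzh), \nabla\psi^*(\vz)) \geq \frac{\sigma}{2}\|\nabla\psi^*(\vzh) - \nabla\psi^*(\vz)\|^2 = \frac{\sigma}{2}\|\nabla\psi^*(\vz) - \nabla\psi^*(\vzh)\|^2$, which is exactly the claim. There is no genuine obstacle here; the only thing requiring care is the bookkeeping of which argument goes where in the duality identity and the (standard) fact that $\nabla\psi^*$ inverts $\nabla\psi$, so the whole argument amounts to a two-line deduction once Proposition~\ref{prop:Bregman-divergence-properties} is in hand.
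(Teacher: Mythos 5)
There is a genuine gap: your reduction hinges on the claim that $\nabla\psi$ and $\nabla\psi^*$ are mutual inverses, so that choosing $\vv=\nabla\psi^*(\vz)$ gives $\nabla\psi(\vv)=\vz$. In this paper the conjugate is the \emph{constrained} one, $\psi^*(\vz)=\max_{\vx\in X}\{\innp{\vz,\vx}-\psi(\vx)\}$ with $X$ a closed convex subset of $\mathbb{R}^n$ (e.g.\ the simplex in the experiments), and then $\nabla\psi^*(\vz)$ is a constrained maximizer: when it lies on the boundary of $X$ one only has the variational inequality $\innp{\nabla\psi(\nabla\psi^*(\vz))-\vz,\,\vu-\nabla\psi^*(\vz)}\geq 0$ for all $\vu\in X$, not the identity $\nabla\psi(\nabla\psi^*(\vz))=\vz$. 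Concretely, for $X=[0,1]^n$ and $\psi=\tfrac12\|\cdot\|_2^2$, $\nabla\psi^*$ is the Euclidean projection onto $X$, and $\nabla\psi(\nabla\psi^*(\vz))=\Pi_X(\vz)\neq\vz$ whenever $\vz\notin X$. Thus, as written, Proposition~\ref{prop:Bregman-divergence-properties} with $\vx=\nabla\psi^*(\vz)$, applied correctly, yields $D_{\psi}(\nabla\psi^*(\vzh),\nabla\psi^*(\vz))=D_{\psi^*}(\nabla\psi(\nabla\psi^*(\vz)),\vzh)$, whose first argument is $\nabla\psi(\nabla\psi^*(\vz))$ rather than $\vz$, and your claimed equality $D_{\psi^*}(\vz,\vzh)=D_{\psi}(\nabla\psi^*(\vzh),\nabla\psi^*(\vz))$ does not follow.

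The statement itself is still true, and your route can be repaired by upgrading the false equality to an inequality in the right direction: expanding both sides with $\psi^*(\vz)=\innp{\nabla\psi^*(\vz),\vz}-\psi(\nabla\psi^*(\vz))$ gives
\begin{equation*}
D_{\psi^*}(\vz,\vzh)-D_{\psi}\bigl(\nabla\psi^*(\vzh),\nabla\psi^*(\vz)\bigr)
=\innp{\nabla\psi(\nabla\psi^*(\vz))-\vz,\;\nabla\psi^*(\vzh)-\nabla\psi^*(\vz)}\;\geq\;0,
\end{equation*}
where the sign is exactly the first-order optimality condition for the maximization defining $\nabla\psi^*(\vz)$ (valid since $\nabla\psi^*(\vzh)\in X$); combining with $D_{\psi}(\nabla\psi^*(\vzh),\nabla\psi^*(\vz))\geq\tfrac{\sigma}{2}\|\nabla\psi^*(\vzh)-\nabla\psi^*(\vz)\|^2$ finishes the argument. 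This is essentially what the paper's proof does: it writes $D_{\psi^*}(\vz,\vzh)=\psi(\nabla\psi^*(\vzh))-\psi(\nabla\psi^*(\vz))-\innp{\vz,\nabla\psi^*(\vzh)-\nabla\psi^*(\vz)}$, applies strong convexity of $\psi$, and then discards the leftover inner-product term using the same first-order optimality condition. The missing ingredient in your proposal is precisely that optimality-condition step; in the unconstrained (or interior) case your two-line argument is fine, but the constrained case is the one the paper needs.
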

\begin{proof}
Using the definition of $D_{\psi^*}(\vz, \vzh)$ and (\ref{eq:psi*-in-terms-of-psi}), we can write $D_{\psi^*}(\vz, \vzh)$ as:
$$
D_{\psi^*}(\vz, \vzh) = \psi(\nabla \psi^*(\vzh)) - \psi(\nabla\psi^*(\vz)) - \innp{\vz, \nabla \psi^*(\vzh) - \nabla \psi^*(\vz)}.
$$
Since $\psi(\cdot)$ is $\sigma$-strongly convex, it follows that:
$$
D_{\psi^*}(\vz, \vzh) \geq \frac{\sigma}{2}\|\nabla \psi^*(\vzh) - \nabla \psi^*(\vz)\|^2 + \innp{\nabla \psi(\nabla \psi^*(\vz))-\vz, \nabla \psi^*(\vzh) - \nabla \psi^*(\vz)}.
$$
As, from Fact \ref{fact:danskin}, $\nabla\psi^*(\vz) = \arg\max_{\vx \in X}\{\innp{\vx, \vz} - \psi(\vx)\}$, by the first-order optimality condition $$\innp{\nabla \psi(\nabla \psi^*(\vz))-\vz, \nabla \psi^*(\vzh) - \nabla \psi^*(\vz)} \geq 0,$$ completing the proof.
\end{proof}

The Bregman divergence $D_{\psi^*}(\vx,\vy)$ captures the difference between $\psi^*(\vx)$ and its first order approximation at $\vy.$ Notice that, for a differentiable $\psi^*$, we have:
$$
\nabla_{\vx} D_{\psi^*}(\vx,\vy) = \nabla \psi^*(\vx) - \nabla \psi^*(\vy).
$$
The Bregman divergence $D_{\psi^*}(\vx,\vy)$ is a convex function of $\vx.$ Its Bregman divergence is itself.
\begin{proposition}\label{prop:magic-identity}
For all $\vx, \vy, \vz \in X$ 
$$
D_{\psi^*}(\vx, \vy) = D_{\psi^*}(\vz, \vy) + \innp{\nabla \psi^*	(\vz) - \nabla \psi^*(\vy), \vx - \vz} + D_{\psi^*}(\vx, \vz).
$$
\end{proposition}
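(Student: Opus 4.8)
The plan is to prove the "three-point identity" (Proposition~\ref{prop:magic-identity}) by direct expansion of all three Bregman divergences appearing on the right-hand side, using only the definition $D_{\psi^*}(\vu,\vv) = \psi^*(\vu) - \psi^*(\vv) - \innp{\nabla\psi^*(\vv), \vu - \vv}$. First I would write out $D_{\psi^*}(\vz,\vy) = \psi^*(\vz) - \psi^*(\vy) - \innp{\nabla\psi^*(\vy), \vz - \vy}$ and $D_{\psi^*}(\vx,\vz) = \psi^*(\vx) - \psi^*(\vz) - \innp{\nabla\psi^*(\vz), \vx - \vz}$, and add to these the cross term $\innp{\nabla\psi^*(\vz) - \nabla\psi^*(\vy), \vx - \vz}$.

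Next I would simply collect terms. The $\psi^*(\vz)$ contributions cancel between the first two divergences, leaving $\psi^*(\vx) - \psi^*(\vy)$ plus the sum of the three inner-product terms. The inner-product terms are $-\innp{\nabla\psi^*(\vy), \vz - \vy} - \innp{\nabla\psi^*(\vz), \vx - \vz} + \innp{\nabla\psi^*(\vz), \vx - \vz} - \innp{\nabla\psi^*(\vy), \vx - \vz}$; here the $\innp{\nabla\psi^*(\vz), \vx - \vz}$ terms cancel, and the two remaining $\nabla\psi^*(\vy)$ terms combine as $-\innp{\nabla\psi^*(\vy), (\vz - \vy) + (\vx - \vz)} = -\innp{\nabla\psi^*(\vy), \vx - \vy}$. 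What is left is exactly $\psi^*(\vx) - \psi^*(\vy) - \innp{\nabla\psi^*(\vy), \vx - \vy} = D_{\psi^*}(\vx,\vy)$, which is the claim.

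There is essentially no obstacle here — the identity is a formal consequence of the definition and holds for the Bregman divergence of any differentiable function (the strong convexity of $\psi$, hence differentiability of $\psi^*$, is only needed to ensure $\nabla\psi^*$ exists so that the expression makes sense; Fact~\ref{fact:danskin} guarantees $\nabla\psi^*$ is well-defined on the relevant domain). The only point requiring a modicum of care is bookkeeping of signs in the telescoping of the inner-product terms, so I would present the cancellation explicitly rather than leaving it to the reader. If one prefers a slicker route, one can instead observe that the right-hand side, viewed as a function of $\vx$, has gradient $\nabla\psi^*(\vx) - \nabla\psi^*(\vy)$ (using $\nabla_{\vx} D_{\psi^*}(\vx,\vy) = \nabla\psi^*(\vx) - \nabla\psi^*(\vy)$ and $\nabla_{\vx} D_{\psi^*}(\vx,\vz) = \nabla\psi^*(\vx) - \nabla\psi^*(\vz)$), matches the gradient of the left-hand side, and agrees with it in value at $\vx = \vz$; but the direct expansion is shortest and self-contained, so that is the version I would write.
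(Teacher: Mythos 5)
Your proof is correct: the term-by-term expansion and cancellation is exactly the canonical verification of this three-point identity, and your observation that only differentiability of $\psi^*$ (guaranteed here by strong convexity of $\psi$ via Fact~\ref{fact:danskin}) is needed is accurate. The paper itself states Proposition~\ref{prop:magic-identity} without proof, treating it as a standard fact, so your direct expansion is precisely the argument the authors implicitly rely on and nothing in your approach diverges from theirs.
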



\section{Omitted Proofs from Section \ref{sec:axgd}}

\begin{repproposition}{prop:arg-min}
Let $\vz^{(t)} = \nabla \psi(\vx^{(t_0)}) - \int_{t_0}^t \nabla f(\vx^{(\tau)})d\alpha^{(\tau)}$. Then:
$$
\nabla \psi^*(\vz^{(t)}) = \arg\min_{\vu \in X}\left\{\int_{t_0}^{t} \innp{\nabla f(\vx^{(\tau)}), \vu - \vx^{(\tau)}} d\alpha^{(\tau)} + D_{\psi}(\vu, \vx^{(t_0)}) \right\}.
$$
\end{repproposition}
\begin{proof}
From the definition of Bregman divergence:
\begin{align*}
&\arg \min_{\vu \in X}\left\{\int_{t_0}^{t} \innp{\nabla f(\vx^{(\tau)}), \vu - \vx^{(\tau)}} d\alpha^{(\tau)} + D_{\psi}(\vu, \vx^{(t_0)}) \right\}\\
&\;= \arg\min_{\vu \in X}\left\{\int_{t_0}^{t} \innp{\nabla f(\vx^{(\tau)}), \vu - \vx^{(\tau)}} d\alpha^{(\tau)} + \psi(\vu) - \psi(\vx^{(t_0)})-\innp{\nabla\psi(\vx^{(t_0)}),\vu - \vx^{(t_0)}}\right\}\\
&\;= \arg\min_{\vu \in X}\left\{\innp{\int_{t_0}^{t}\nabla f(\vx^{(\tau)})d\alpha^{(\tau)} - \nabla\psi(\vx^{(t_0)}), \vu} + \psi(\vu)\right\}.
\end{align*}
Using the definition of $\vz^{(t)}$ and Fact \ref{fact:danskin}, the proof follows.
\end{proof}

\begin{proof}[Remaining Proof of Theorem \ref{thm:smooth-acc-mp} (The Bound on $G_1$)]
To bound $G_1$, we recall the definition of $L_1$:
\begin{align}
L_1 &= f(\vx^{(1)}) + \min_{\vx\in X}\left\{ \innp{\nabla f(\vx^{(1)}), \vx - \vx^{(1)}} + \frac{1}{A_1}D_{\psi}(\vx, \vxh^{(0)}) \right\} - \frac{1}{A_1}D_{\psi}(\vx^*, \vxh^{(0)})\notag\\
&= f(\vx^{(1)}) + \innp{\nabla f(\vx^{(1)}), \nabla\psi^*(\vz^{(1)}) - \vx^{(1)}} + \frac{1}{A_1}D_{\psi}(\nabla\psi^*(\vz^{(1)}), \vxh^{(0)}) - \frac{1}{A_1} D_{\psi}(\vx^*, \vxh^{(0)})\notag.
\end{align}
As $a_1 = A_1$, $\vx^{(1)}=\nabla\psi^*(\vzh^{(0)})$, and $a_1\nabla f(\vxh^{(0)}) = \vz^{(0)}-\vzh^{(0)}$, using Proposition \ref{prop:magic-identity}, we have that:
\begin{align}
\innp{\nabla f(\vxh^{(0)}), \nabla\psi^*(\vz^{(1)}) - \vx^{(1)}} &= \frac{1}{A_1}\innp{\vz^{(0)}-\vzh^{(0)}, \nabla\psi^*(\vz^{(1)})-\nabla\psi^*(\vzh^{(0)})}\notag\\
&= \frac{1}{A_1}\left( D_{\psi^*}(\vz^{(0)}, \vzh^{(0)}) - D_{\psi^*}(\vz^{(0)}, \vz^{(1)}) + D_{\psi^*}(\vzh^{(0)}, \vz^{(1)}) \right).\label{eq:dt-l1-acc-mp}
\end{align}
On the other hand, by smoothness of $f(\cdot)$ and the initial condition: 
\begin{align}\label{eq:dt-l1-acc-mp-2}
\innp{\nabla f(\vx^{(1)})-\nabla f(\vxh^{(0)}), \nabla\psi^*(\vz^{(1)}) - \vx^{(1)}}\geq -L\|\nabla \psi^*(\vzh^{(0)}) - \vxh^{(0)}\|\|\nabla\psi^*(\vz^{(1)}) - \vx^{(1)}\|.
\end{align}
Finally, by Proposition \ref{prop:Bregman-divergence-properties} and the initial condition $\vz^{(0)}=\nabla\psi(\vxh^{(0)})$, we have that $D_{\psi^*}(\vz^{(0)}, \vz^{(1)})=D_{\psi}(\nabla \psi^*(\vz^{(1)}), \vxh^{(0)})$. Combining with (\ref{eq:dt-l1-acc-mp}), (\ref{eq:dt-l1-acc-mp-2}), and $G_1 = U_1 - L_1 = f(\vx^{(1)})-L_1$:
\begin{align}
G_1 \leq & L\|\nabla \psi^*(\vzh^{(0)}) - \vxh^{(0)}\|\cdot\|\nabla\psi^*(\vz^{(1)}) - \vx^{(1)}\|\notag\\
& - \frac{1}{A_1}\left(  D_{\psi^*}(\vz^{(0)}, \vzh^{(0)})+ D_{\psi^*}(\vzh^{(0)}, \vz^{(1)}) \right) + \frac{1}{A_1} D_{\psi}(\vx^*, \vxh^{(0)})\notag\\
=& L\|\nabla \psi^*(\vzh^{(0)}) - \vxh^{(0)}\|\cdot\|\nabla\psi^*(\vz^{(1)}) - \vx^{(1)}\|\notag\\
&- \frac{1}{A_1}\left(  D_{\psi}(\nabla\psi^*(\vzh^{(0)}), \vxh^{(0)}) + D_{\psi^*}(\vzh^{(0)}, \vz^{(1)}) \right) + \frac{1}{A_1} D_{\psi}(\vx^*, \vxh^{(0)})  \notag\\
\leq & L\|\nabla \psi^*(\vzh^{(0)}) - \vxh^{(0)}\|\cdot\|\nabla\psi^*(\vz^{(1)}) - \vx^{(1)}\|\notag\\
&- \frac{\sigma}{2A_1}\left(\|\nabla \psi^*(\vzh^{(0)}) - \vxh^{(0)}\|^2 + \|\nabla\psi^*(\vz^{(1)}) - \vx^{(1)}\|^2\right)+\frac{1}{A_1} D_{\psi}(\vx^*, \vxh^{(0)})\notag\\
\leq &\frac{1}{A_1} D_{\psi}(\vx^*, \vxh^{(0)}),\notag
\end{align}
where we have used Proposition \ref{prop:Bregman-divergence-properties}, $\vx^{(1)} = \nabla\psi^*(\vzh^{(0)})$, and $\frac{{a_1}^2}{A_1} = A_1 \leq \frac{\sigma}{L}$.
\end{proof}

\begin{repproposition}{prop:ineq-needed-for-mp-holder}
$$a_{i+1}\innp{\nabla f(\vx^{(i+1)}) - \nabla f(\vxh^{(i)}), \nabla \psi^*(\vz^{(i+1)}) - \nabla \psi^*(\vzh^{(i)})} \leq \sigma^{-1}{a_{i+1}}^2 \|\nabla f(\vx^{(i+1)}) - \nabla f(\vxh^{(i)})\|^2.$$
\end{repproposition}
\begin{proof}
From the first order optimality condition in Fact \ref{fact:danskin}, $\forall \vx, \vy \in X$:
\begin{align}
\innp{\nabla \psi(\nabla \psi^*(\vz^{(i+1)})) - \vz^{(i+1)}, \vx - \nabla \psi^*(\vz^{(i+1)})} &\geq 0, \text{ and} \label{eq:holder-1st-oo-1}\\
\innp{\nabla \psi(\nabla \psi^*(\vzh^{(i)})) - \vzh^{(i)}, \vy - \nabla \psi^*(\vzh^{(i)})} &\geq 0. \label{eq:holder-1st-oo-2}
\end{align}
Letting $\vx = \nabla \psi^*(\vzh^{(i)})$, $\vy = \nabla \psi^*(\vz^{(i+1)})$, and summing (\ref{eq:holder-1st-oo-1}) and (\ref{eq:holder-1st-oo-2}):
\begin{align}
&\innp{\vzh^{(i)} - \vz^{(i+1)},  \nabla \psi^*(\vzh^{(i)}) - \nabla \psi^*(\vz^{(i+1)})}\notag\\
&\hspace{.5in}\geq \innp{\nabla \psi(\nabla \psi^*(\vzh^{(i)})) - \nabla \psi(\nabla \psi^*(\vz^{(i+1)})), \nabla \psi^*(\vzh^{(i)}) - \nabla \psi^*(\vz^{(i+1)})}\notag\\
&\hspace{.5in} \geq \sigma \|\nabla \psi^*(\vzh^{(i)}) - \nabla \psi^*(\vz^{(i+1)})\|^2, \label{eq:holder-psi-sc}
\end{align}
where (\ref{eq:holder-psi-sc}) follows by the $\sigma$-strong convexity of $\psi(\cdot)$. 
Using the Cauchy-Schwartz inequality and dividing both sides by $\|\nabla \psi^*(\vzh^{(i)}) - \nabla \psi^*(\vz^{(i+1)})\|$ gives $\|\vzh^{(i)} - \vz^{(i+1)}\| \geq \sigma\|\nabla \psi^*(\vzh^{(i)}) - \nabla \psi^*(\vz^{(i+1)})\|$. 

Since, by the step definition (\ref{eq:dt-acc-mp-step}), $\vzh^{(i)} - \vz^{(i+1)} = a_{i+1}(\nabla f(\vx^{(i+1)}) - \nabla f(\vxh^{(i)}))$, applying Cauchy-Schwartz Inequality to $a_{i+1}\innp{\nabla f(\vx^{(i+1)}) - \nabla f(\vxh^{(i)}), \nabla \psi^*(\vz^{(i+1)}) - \nabla \psi^*(\vzh^{(i)})}$ completes the proof.
\end{proof}

\end{document}